\numberwithin{equation}{section}
\begin{document}


\title
[Monomial algebras
defined by Lyndon words] {Monomial algebras
defined by Lyndon words}

\author{Tatiana Gateva-Ivanova}
\address{Institute of Mathematics and Informatics\\
         Bulgarian Academy of Sciences\\
         Sofia 1113\\
         Bulgaria}
\email{tatianagateva@yahoo.com; tatyana@aubg.bg}
\thanks{The first author was partially supported by the Max Planck Institute for Mathematics, Bonn}
\author{Gunnar Fl{\o}ystad}
\address{Matematisk Institutt\\
         Johs. Brunsgt. 12\\
         5008 Bergen\\
         Norway}
\email{gunnar@mi.uib.no}

\theoremstyle{plain}
\newtheorem{theorem}{Theorem}[section]
\newtheorem{corollary}[theorem]{Corollary}
\newtheorem*{main}{Main Theorem}
\newtheorem{lemma}[theorem]{Lemma}
\newtheorem{proposition}[theorem]{Proposition}
\newtheorem{conjecture}[theorem]{Conjecture}
\newtheorem{fact}[theorem]{Fact}
\newtheorem*{pieri}{Pieri's rule}
\newtheorem*{theoremA}{Theorem A}
\newtheorem*{theoremB}{Theorem B}
\newtheorem*{theoremC}{Theorem C}
\newtheorem*{proofA}{Proof of Theorem A}
\newtheorem*{proofB}{Proof of Theorem B}
\newtheorem*{proofC}{Proof of Theorem C}
\newtheorem*{problem}{Problem}
\newtheorem*{method}{Method}

\theoremstyle{definition}
\newtheorem{definition}[theorem]{Definition}

\theoremstyle{remark}
\newtheorem{notation}[theorem]{Notation}
\newtheorem{remark}[theorem]{Remark}
\newtheorem{example}[theorem]{Example}
\newtheorem{openquestion}[theorem]{Open Question}
\newtheorem{claim}{Claim}


\newcommand{\psp}[1]{{{\bf P}^{#1}}}
\newcommand{\psr}[1]{{\bf P}(#1)}
\newcommand{\opw}{\op_{\psr{W}}}
\newcommand{\go}{\op}

\newcommand{\ini}[1]{\text{in}(#1)}
\newcommand{\gin}[1]{\text{gin}(#1)}
\newcommand{\kr}{{\Bbbk}}
\newcommand{\kk}{{\Bbbk}}
\newcommand{\pd}{\partial}
\newcommand{\vardel}{\partial}
\renewcommand{\tt}{{\bf t}}


\newcommand{\coh}{{{\text{{\rm coh}}}}}
\def\diag{\operatorname {diag}}

\newcommand{\modv}[1]{{#1}\text{-{mod}}}
\newcommand{\modstab}[1]{{#1}-\underline{\text{mod}}}

\newcommand{\sut}{{}^{\tau}}
\newcommand{\sumit}{{}^{-\tau}}
\newcommand{\til}{\thicksim}

\newcommand{\totp}{\text{Tot}^{\prod}}
\newcommand{\dsum}{\bigoplus}
\newcommand{\dprod}{\prod}
\newcommand{\lsum}{\oplus}
\newcommand{\lprod}{\Pi}

\newcommand{\La}{{\Lambda}}
\newcommand{\lam}{{\lambda}}
\newcommand{\GL}{{GL}}
\newcommand{\asX}{\langle X \rangle}

\newcommand{\sirstj}{\circledast}

\newcommand{\she}{\EuScript{S}\text{h}}
\newcommand{\cm}{\EuScript{CM}}
\newcommand{\cmd}{\EuScript{CM}^\dagger}
\newcommand{\cmri}{\EuScript{CM}^\circ}
\newcommand{\cler}{\EuScript{CL}}
\newcommand{\clerd}{\EuScript{CL}^\dagger}
\newcommand{\clerri}{\EuScript{CL}^\circ}
\newcommand{\gor}{\EuScript{G}}
\newcommand{\gA}{\mathcal{A}}
\newcommand{\gF}{\mathcal{F}}
\newcommand{\gG}{\mathcal{G}}
\newcommand{\gM}{\mathcal{M}}
\newcommand{\gE}{\mathcal{E}}
\newcommand{\gD}{\mathcal{D}}
\newcommand{\gI}{\mathcal{I}}
\newcommand{\gP}{\mathcal{P}}
\newcommand{\gK}{\mathcal{K}}
\newcommand{\gL}{\mathcal{L}}
\newcommand{\gS}{\mathcal{S}}
\newcommand{\gC}{\mathcal{C}}
\newcommand{\gO}{\mathcal{O}}
\newcommand{\gJ}{\mathcal{J}}
\newcommand{\gU}{\mathcal{U}}
\newcommand{\mm}{\mathfrak{m}}

\newcommand{\dlim} {\varinjlim}
\newcommand{\ilim} {\varprojlim}

\newcommand{\CM}{\text{CM}}
\newcommand{\Mon}{\text{Mon}}


\newcommand{\Kom}{\text{Kom}}


\newcommand{\EH}{{\mathbf H}}
\newcommand{\res}{\text{res}}
\newcommand{\Hom}{\text{Hom}}
\newcommand{\inhom}{{\underline{\text{Hom}}}}
\newcommand{\Ext}{\text{Ext}}
\newcommand{\Tor}{\text{Tor}}
\newcommand{\ghom}{\mathcal{H}om}
\newcommand{\gext}{\mathcal{E}xt}
\newcommand{\id}{\text{{id}}}
\newcommand{\im}{\text{im}\,}
\newcommand{\codim} {\text{codim}\,}
\newcommand{\resol}{\text{resol}\,}
\newcommand{\rank}{\text{rank}\,}
\newcommand{\lpd}{\text{lpd}\,}
\newcommand{\coker}{\text{coker}\,}
\newcommand{\supp}{\text{supp}\,}
\newcommand{\Ad}{A_\cdot}
\newcommand{\Bd}{B_\cdot}
\newcommand{\Gd}{G_\cdot}


\newcommand{\sus}{\subseteq}
\newcommand{\sups}{\supseteq}
\newcommand{\pil}{\rightarrow}
\newcommand{\vpil}{\leftarrow}
\newcommand{\lvpil}{\longleftarrow}
\newcommand{\rpil}{\leftarrow}
\newcommand{\lpil}{\longrightarrow}
\newcommand{\inpil}{\hookrightarrow}
\newcommand{\pils}{\twoheadrightarrow}
\newcommand{\projpil}{\dashrightarrow}
\newcommand{\dotpil}{\dashrightarrow}
\newcommand{\adj}[2]{\overset{#1}{\underset{#2}{\rightleftarrows}}}
\newcommand{\mto}[1]{\stackrel{#1}\longrightarrow}
\newcommand{\vmto}[1]{\overset{\tiny{#1}}{\longleftarrow}}
\newcommand{\mtoelm}[1]{\stackrel{#1}\mapsto}

\newcommand{\eqv}{\Leftrightarrow}
\newcommand{\impl}{\Rightarrow}

\newcommand{\iso}{\cong}
\newcommand{\te}{\otimes}
\newcommand{\tek}{\te_\kr}
\newcommand{\sqte}{\te}
\newcommand{\into}[1]{\hookrightarrow{#1}}
\newcommand{\ekv}{\Leftrightarrow}
\newcommand{\equi}{\simeq}
\newcommand{\isopil}{\overset{\cong}{\lpil}}
\newcommand{\equipil}{\overset{\equi}{\lpil}}
\newcommand{\ispil}{\isopil}
\newcommand{\vvi}{\langle}
\newcommand{\hvi}{\rangle}
\newcommand{\susneq}{\subsetneq}
\newcommand{\sgn}{\text{sign}}


\newcommand{\xd}{\check{x}}
\newcommand{\ortog}{\bot}
\newcommand{\tL}{\tilde{L}}
\newcommand{\tM}{\tilde{M}}
\newcommand{\tH}{\tilde{H}}
\newcommand{\tvH}{\widetilde{H}}
\newcommand{\tvh}{\widetilde{h}}
\newcommand{\tV}{\tilde{V}}
\newcommand{\tS}{\tilde{S}}
\newcommand{\tT}{\tilde{T}}
\newcommand{\tR}{\tilde{R}}
\newcommand{\tf}{\tilde{f}}
\newcommand{\ts}{\tilde{s}}
\newcommand{\tp}{\tilde{p}}
\newcommand{\tr}{\tilde{r}}
\newcommand{\tfst}{\tilde{f}_*}
\newcommand{\empt}{\emptyset}
\newcommand{\bfa}{{\bf a}}
\newcommand{\bfb}{{\bf b}}
\newcommand{\bfd}{{\bf d}}
\newcommand{\bfe}{{\bf e}}
\newcommand{\bfp}{{\bf p}}
\newcommand{\bfc}{{\bf c}}
\newcommand{\bfl}{{\bf \ell}}
\newcommand{\bfz}{{\bf z}}
\newcommand{\bfj}{{\bf j}}
\newcommand{\ubfd}{\underline{\bfd}}
\newcommand{\la}{\lambda}
\newcommand{\bfen}{{\mathbf 1}}
\newcommand{\ep}{\epsilon}
\newcommand{\en}{r}
\newcommand{\tu}{s}
\newcommand{\integ}{{int}}
\newcommand{\module}{{mod}}
\newcommand{\inc}{{deg}}
\newcommand{\dec}{{root}}

\newcommand{\ome}{\omega_E}

\newcommand{\bevis}{{\bf Proof. }}
\newcommand{\demofin}{\qed \vskip 3.5mm}
\newcommand{\nyp}[1]{\noindent {\bf (#1)}}
\newcommand{\demo}{{\it Proof. }}
\newcommand{\demodone}{\demofin}
\newcommand{\parg}{{\vskip 2mm \addtocounter{theorem}{1}
                   \noindent {\bf \thetheorem .} \hskip 1.5mm }}

\newcommand{\lcm}{{\text{lcm}}}


\newcommand{\dl}{\Delta}
\newcommand{\cdel}{{C\Delta}}
\newcommand{\cdelp}{{C\Delta^{\prime}}}
\newcommand{\dlst}{\Delta^*}
\newcommand{\Sdl}{{\mathcal S}_{\dl}}
\newcommand{\lk}{\text{lk}}
\newcommand{\lkd}{\lk_\Delta}
\newcommand{\lkp}[2]{\lk_{#1} {#2}}
\newcommand{\del}{\Delta}
\newcommand{\delr}{\Delta_{-R}}
\newcommand{\dd}{{\dim \del}}

\renewcommand{\aa}{{\bf a}}
\newcommand{\bb}{{\bf b}}
\newcommand{\cc}{{\bf c}}
\newcommand{\xx}{{\bf x}}
\newcommand{\yy}{{\bf y}}
\newcommand{\zz}{{\bf z}}
\newcommand{\mv}{{\xx^{\aa_v}}}
\newcommand{\mF}{{\xx^{\aa_F}}}

\newcommand{\proj}[1]{{\mathbb P}^{#1}}
\newcommand{\hele}{{\mathbb Z}}
\newcommand{\nat}{{\mathbb N}}
\newcommand{\rat}{{\mathbb Q}}

\newcommand{\pnm}{{\bf P}^{n-1}}
\newcommand{\opnm}{{\go_{\pnm}}}
\newcommand{\op}[1]{\gO_{\proj{#1}}}
\newcommand{\ompn}{\Omega_{\proj{n}}}
\newcommand{\opn}{\op{n}}
\newcommand{\opm}{\op{m}}
\newcommand{\ompnm}{\omega_{\pnm}}
\newcommand{\Ncal}{\mbox{$\cal N$}}

\newcommand{\dt}{{\displaystyle \cdot}}
\newcommand{\st}{\hskip 0.5mm {}^{\rule{0.4pt}{1.5mm}}}
\newcommand{\disk}{\scriptscriptstyle{\bullet}}

\newcommand{\cF}{F_\dt}
\newcommand{\Fd}{F_{\disk}}
\newcommand{\pol}{f}

\newcommand{\disc}{\circle*{5}}

\newcommand{\Dab}{{\mathbb D}(\bfa, \bfb)}
\newcommand{\Ddab}{B(\bfa, \bfb)}
\newcommand{\Tab}{{\mathbb T}(\bfa, \bfb)}
\newcommand{\Cab}{C(\bfa, \bfb)}
\newcommand{\Pab}{B(\bfa, \bfb)}
\newcommand{\Lhkab}{L^{HK}(\bfa, \bfb)}
\newcommand{\Lab}{L(\bfa, \bfb)}
\newcommand{\Sigab}{\Sigma(\bfa, \bfb)}
\newcommand{\hlow}{h_{low}}
\newcommand{\hup}{h_{up}}
\newcommand{\up}[2]{\hup}
\newcommand{\facet}[2]{{\bf facet}(#1, #2)}
\newcommand{\BS}{Boij-S\"oderberg}
\newcommand{\bstar}{{\mathbb B}^*}
\newcommand{\Symm}{\mbox{Symm}}
\newcommand{\zinc}[1]{{\mathbb Z}^{#1}_\inc}
\newcommand{\zdec}[1]{{\mathbb Z}^{#1}_\dec}
\newcommand{\fib}{f}
\newcommand{\asV}{\langle V \rangle}

\def\CC{{\mathbb C}}
\def\GG{{\mathbb G}}
\def\ZZ{{\mathbb Z}}
\def\NN{{\mathbb N}}
\def\RR{{\mathbb R}}
\def\OO{{\mathbb O}}
\def\QQ{{\mathbb Q}}
\def\VV{{\mathbb V}}
\def\PP{{\mathbb P}}
\def\EE{{\mathbb E}}
\def\FF{{\mathbb F}}
\def\AA{{\mathbb A}}

\keywords{Lyndon words, monomial algebras,  polynomial growth,
global dimension, Artin-Schelter regular algebras}
\subjclass[2010]
{Primary: 16P90, 16S15, 18G20, 68R15; Secondary:
 16S80, 16Z05}
\date{\today}

\begin{abstract}
Assume that  $X=  \{x_1,\cdots,x_g\}$ is a finite alphabet and $K$
is a field. We study monomial algebras $A= K \langle X \rangle
/(W)$, where $W$ is an antichain of Lyndon words in $X$ of
arbitrary cardinality. We find a Poincar\'{e}-Birkhoff-Witt type
basis of $A$ in terms of its \emph{Lyndon atoms} $N$, but, in
general, $N$ may be infinite. We prove that if  $A$ has polynomial
growth of degree $d$ then $A$ has global dimension $d$ and is
standard finitely presented, with $d-1 \leq |W| \leq d(d-1)/2$. Furthermore,
$A$ has polynomial growth \emph{iff} the set of Lyndon atoms $N$ is
finite. In this case  $A$ has a $K$-basis $\mathfrak{N} =
\{l_1^{\alpha_{1}}l_2^{\alpha_{2}}\cdots l_d^{\alpha_{d}} \mid
\alpha_{i} \geq 0,  1 \leq i \leq d\}$, where $N = \{l_1,
\cdots,l_d\}$. We give an extremal class of monomial algebras, the
Fibonacci-Lyndon algebras, $F_n$, with global dimension $n$ and
polynomial growth, and show that the algebra $F_6$ of global
dimension $6$ cannot be deformed, keeping the multigrading, to an
Artin-Schelter regular algebra.
\end{abstract}
\maketitle

\section{Introduction}

Let $X= \{x_1, x_2, \cdots,  x_g\}$  be a finite alphabet. Denote by
$X^*$ the free monoid generated by $X$, the empty word is denoted by
$1$.  $X^{+}$ is the free semigroup generated by $X$, $X^{+}= X^*
-\{1\}$. Throughout the paper  $K\asX$ stands for the free
associative $K$-algebra generated by $X$, where $K$ is a field. As
usual, the length of a word $w\in X^{+}$ is denoted by $|w|$. We
shall consider the canonical grading  on $K \asX$, by length of words. We assume
that each $x \in X$ has degree $1$.

Given an antichain of monomials $W \subset X^{+}$,  the monomial
algebra $A= K\asX/ (W)$ is a particular case of a finitely generated
augmented graded algebra with a set of obstructions $W$, see
\cite{Anick85} and \cite{Anick86}. Here and in the sequel $(W)$ denotes the two-sided ideal
in $K\asX$ generated by $W$. We shall  study monomial algebras
defined by Lyndon words.

This work together with \cite{TGI0612} initiate the study of
algebraic and homological properties \emph{of graded associative
algebras for which the set of obstructions consists of Lyndon
words}. Lyndon words and Lyndon-Shirshov bases are widely used in
the context of Lie algebras and their enveloping algebras, and also
for PI algebras (see for example the celebrated Shirshov theorem of
heights, \cite{Shirshov1}). It will be interesting to explore the remarkable
combinatorial properties of Lyndon words in a more general context
of associative algebras.

Anick studies the class of monomial algebras with finite global
dimension $d < \infty$, \cite{Anick85}. He proves that every such
algebra either i) contains a free subalgebra generated by two
monomials (and therefore has exponential growth);  or ii) $A$ is
finitely presented and has polynomial growth. In the second case he
defines recursively a finite set $N$ of new generators for $A$,
called \emph{atoms}, with $|N|= d$,  and uses the atoms to build a
Poincar\'{e}-Birkhoff-Witt type $K$-basis of $A$ and to describe the
structure of the monomial relations in $W$. Moreover, he proves that
$A$  has the Hilbert series of a (usually nonstandard) graded
polynomial ring:
\[ H_A (t) = \prod_{i = 1}^n \frac{1}{1- t^{e_i}} \]
for some positive integers $e_1, \ldots, e_d$. It is amazing to see
how Anick discovered that his atoms satisfy all good combinatorial
properties of Lyndon words. (Possibly he did not know about Lyndon
words or Lyndon's theorem).

In this paper we  study monomial algebras $A= K \langle X \rangle
/(W)$, where $W$ is an antichain of Lyndon words in $X$. As a
starting point we consider the most general case, when $W$ has
arbitrary cardinality, and no assumptions for finiteness of growth,
or global dimension are made.  In this setting we introduce the set
$N$ of Lyndon atoms, these are the Lyndon words which are normal modulo $(W)$.
(In the context of Lie algebras these are often called standard
Lyndon words, see \cite{LalondeRam}). The set $N$ contains $X$, and,
in general, may be infinite, but exactly the atoms are involved in a
constructive description of both the normal $K$-basis of $A$ and the
set of relations $W,$  so that it is easy to control the growth and
the global dimension of $A$. Using the good combinatorial properties
of Lyndon words we show that
the set $W$ of monomial relations and the set $N$ of Lyndon atoms
are very closely related. We prove that the monomial algebras $A$
defined by Lyndon words have a remarkable property:

\emph{If $A$ has polynomial growth of degree $d$ then $A$ has finite
global dimension $d$ and is standard finitely presented with} $ d-1 \leq |W|
\leq d(d-1)/2$.

In this case the normal $K$- basis of $A$ is $\mathfrak{N} =
\{l_1^{k_{1}}l_2^{k_{2}}\cdots l_d^{k_{d}} \mid k_{i} \geq 0, 1 \leq
i \leq d\}$, where $N = \{l_1, \cdots,l_d\}$ is the set of Lyndon
atoms. Clearly, $\mathfrak{N}$ is a Poincar\'{e}-Birkhoff-Witt type
$K$-basis of $A$, one can consider it as a particular case of
Shirshov basis of height $d$.

Note that in the class  of monomial algebras defined by Lyndon words
our result complements a result by Anick
 which states that if a monomial algebra
$A$ has a finite global dimension $d$ and does not contain
two-generated free subalgebras, then $A$ has polynomial growth of
degree $d$,  \cite{Anick85}, Theorem 6, but the proof of our results
is independent of this theorem of Anick.

A natural question arises: whether our monomial algebras deform to
Artin-Schelter regular algebras. We  find a class of monomial
algebras, Fibonacci-Lyndon algebras $F_n$,  $n \geq 2$, which are
extremal in the class of monomial algebras defined by Lyndon words.
Each $F_n$ has global dimension $n$ and polynomial growth of degree
$n$ and is uniquely determined up to isomorphism.  The algebras
$F_n$ are generated by two variables, hence they are
$\hele^2$-graded. While $F_n$, with $n \leq 5$,  has
$\hele^2$-graded Artin-Schelter deformations, \cite{FlVa}, we show that this is
not the case for $F_6$. However we do not exclude that it may have singly graded such deformations.

One of our goals in the paper is to read off the properties of $A$
directly from its presentation,  and, when this is possible,
independently of Anick's results. What we really use is his notion
of $n$-chains and his purely combinatorial condition in terms of
$n$-chains, necessary and sufficient for finite global dimension,
see Fact \ref{fact_gldim} extracted from \cite{Anick85},  Theorem 4.
Note that all concrete monomial algebras $A$ with i) finite global
dimension and ii) polynomial growth given as examples in
\cite{Anick85} are defined by antichains of Lyndon words. So it is
natural to ask: is it true that if  a monomial algebra $A$ satisfies
i) and ii),  then the set of defining relations $W$ consists of Lyndon
words, w.r.t. appropriate enumeration of the generating set $X$. The
answer is affirmative if $A$ is a quadratic algebra, i.e. $W$
consists of monomials of length $2$, see \cite{TGI0512}, Theorem
1.1. In Section \ref{motivation&background} we give an example of an
algebra with three generators, satisfying i) and ii) and such that
$W$ is not a set of Lyndon words, w.r.t. any ordering of $X$.

The paper is organized as follows. In Section \ref{def&results} we
give basic notions and state our main results, Theorems A and B. In Section
\ref{motivation&background} we give some background and
 motivation. In Section \ref{thenormabasis}
 we prove
some  results about Lyndon words, essential for the paper, we use
these and Lyndon's theorem to show that the normal $K$-basis of $A$
is built out of its \emph{Lyndon atoms} $N$ and prove Theorem A.
 In Section \ref{Sec_determinimgPolyGrowth} we investigate the close relations
 between the set $W$ of
defining Lyndon words, and the set $N$ of Lyndon atoms. In Section
\ref{GlobalDimension}  we find some combinatorial properties of
$n$-chains,
we show that the algebra has finite global dimension whenever the
set  $W$ is finite and prove Theorem B.
 In Section \ref{FibonacciAlgebra}
we define and study the Fibonacci-Lyndon algebras $F_n$, and in
 Section \ref{FibASSec} we show that $F_6$ has no
deformation which is a bigraded Artin-Schelter algebra.

\section{Definitions and results}
\label{def&results}

As usual, $X^\ast$ and $X^{+}$ denote, respectively, the free
monoid, and the free semigroup generated by $X$, ($X^{+}= X^*
-\{1\}$).

Consider the partial ordering on the set $X^{+}$ defined as:
$a\sqsubset b$ \emph{iff} $a$ is a proper subword (segment) of $b$,
i.e. $b = uav$, $|b|> |a|$, but $u = 1$, or $v = 1$ is possible. In
the case when $b = av, a, v  \in X^{+} $,   $a$ is called \emph{a
proper left factor (segment) of} $b$. \emph{Proper right factors}
are defined analogously.

Let $W \sus X^+$. If no two elements of $W$ are comparable for this
partial order, $W$ is called an {\it antichain of monomials}. A
monomial $a\in X^{\ast}$ is $W$-\emph{normal} ($W$-\emph{standard})
if $a$ does not contain as a subword any $u \in W$. Denote by
$\mathfrak{N}(W)$  the set of $W$-normal words
\[\mathfrak{N}(W) = \{a \in X^{\ast}\mid a \; \text{is $W$-normal}\}.\]
Note that the set $\mathfrak{N}(W)$ is closed under taking subwords,
Anick calls such a set  \emph{an order ideal of monomials},
\cite[Sec. 1]{Anick86}.

\medskip
Order the alphabet by  $x_1 < x_2 < \cdots < x_g$. The {\it
lexicographic order} $<$ on $X^+$ is defined as follows: For any
$u,v \in X^{+},\;$ $u < v$ \emph{iff } either $u$ is a proper left
factor of $v$,  or \[u = axb, v = ayc\;\text{with}\; x<y,  \;x,y \in
X,  \; a,b,c \in X^*.\]

The following are well-known, see for example \cite{Lothaire}.
\begin{itemize}
\item[L1.] For every  $u \in X^*$ one has $a < b$ \emph{iff} $ua < ub$.
\item[L2.] If $a$ is not a left segment of $b$,  then for all $u, v \in X^{\ast}$ the inequality  $a <
b$ implies
 $au < bv$.
\end{itemize}

So  ``$<$" is a linear ordering on the set $X^{+}$ compatible with
the left multiplication in $X^{+}$.

\begin{remark} Note that
the right multiplication does not necessarily preserve inequalities,
for example $a< ax_2$, but $ax_3
> ax_2x_3$. Furthermore, the decreasing chain condition on monomials is not
satisfied on $(X^{+}, <)$,  for if $x, y \in X, x<y$, one has  $xy>
x^2y> x^3y > \cdots$. \end{remark}

\begin{definition}
\label{Lyndonworddef} \cite{Lothaire}  A nonperiodic word $u \in
X^{+}$ is \emph{a Lyndon word} if it is minimal (with respect to $<
$) in its conjugate class. In other words, $u = ab, a,b\in X^{+}$
implies $u < ba$. The set of Lyndon words in $X^{+}$ will be denoted
by $L$. By definition $X \subset L$.

Given an antichain $W$ of Lyndon words, the set of $W$-\emph{normal
Lyndon words} will be denoted by $N= N(W)$, we shall refer to $N$ as
\emph{the set of Lyndon atoms corresponding to} $W$. By definition
it satisfies
\[N=N(W) = \mathfrak{N}(W) \bigcap L.\]
\end{definition}

We shall study  finitely generated monomial algebras $A = K\langle
X\rangle / (W)$,  where $|X| \geq 2$ and $W$ is a nonempty antichain
of Lyndon words. By convention we shall consider only \emph{minimal
presentations of} $A$, so
\[W\bigcap X = \emptyset \quad \text{and therefore}\; X \subset N \subset\mathfrak{N}.\]
In this case, inspired  by Anick,  \cite{Anick85},  we call  $N = N(W)$  \emph{the set of Lyndon atoms for}
$A$.

Recall that the graded  associative algebra $A = K \asX /(W)$ has
polynomial growth if there is a real number $d$ and a positive
constant $C$ such that for all $n \geq 0$
\[ \dim_K A_n \leq C n^d. \]   The infimum of the possible $d$'s is the {\it
Gelfand-Kirillov} dimension of $A$.

The main results of the paper are the following two theorems which
are stated under the the same hypothesis:

\emph{Assume that $A = K \asX /(W)$ is a monomial algebra, where $W$ is an
antichain of Lyndon words of arbitrary cardinality, $N=N(W)$ is
the set of Lyndon atoms, and $\mathfrak{N}$ is the set of normal words modulo $(W)$.}

\begin{theoremA}
\label{MonLynTheorem}
\begin{enumerate}
\item
 \label{MonLynTheorem11}
 The set
\begin{equation}\label{normalbasiseq}
 \{ l_1^{k_1} l_2^{k_2} \cdots  l_s^{k_s}\mid s \geq
1, \;   \{l_1 > l_2 > \cdots >l_s\}\subseteq N,  \;  k_i \geq 0,\; 1
\leq i\leq s \}.
\end{equation}
is a $K$-basis of $A$. It coincides with the set of normal words $\mathfrak{N}$.

\item
 \label{MonLynTheorem3}
 The following conditions are equivalent:
\begin{itemize}
 \item[(i)] $N$ is a finite
set;
\item[(ii)] $A$ has polynomial growth;
 \item[(iii)] A is a PI algebra;
 \item[(iv)] A can be embedded in a matrix ring over $K$.
\end{itemize}
\end{enumerate}
In this case  $N = \{l_1 > l_2 > \cdots > l_d\}$,  $A$ has a
Poincar\'{e}-Birkhoff-Witt type $K$-basis $\mathfrak{N} =
\{l_1^{\alpha_{1}}l_2^{\alpha_{2}}\cdots l_d^{\alpha_{d}} \mid
\alpha_{i} \geq 0,  1 \leq i \leq d\}$, so $GK \dim A = d$ and its
Hilbert series  is:
\[
H_A(t) = \prod_{1 \leq i \leq d} \frac{1}{(1-t^{|l_i|})}.
\]
\end{theoremA}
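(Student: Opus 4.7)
The plan is to obtain part (1) by combining Lyndon's unique factorization theorem with a careful analysis of how forbidden subwords in $W$ interact with a Chen--Fox--Lyndon (CFL) decomposition, and then to read off part (2) from the resulting PBW-type basis.

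For part (1), Lyndon's theorem expresses every $w \in X^+$ uniquely as a non-increasing product $w = m_1^{k_1} m_2^{k_2} \cdots m_s^{k_s}$ with Lyndon factors $m_1 > m_2 > \cdots > m_s$ and $k_i \geq 1$. The content of (1) then reduces to the equivalence
\[
w \in \mathfrak{N} \iff m_i \in N \text{ for every } i.
\]
The direction $(\Rightarrow)$ is immediate, since $W$-normality passes to subwords and each $m_i$ is a subword of $w$. For $(\Leftarrow)$, I would assume some $u \in W$ occurs as a subword of $w$; since $u$ is Lyndon by hypothesis on $W$, I would invoke two preliminary combinatorial lemmas about Lyndon words --- first, that any Lyndon subword of $w$ lies inside one of the blocks $m_i^{k_i}$ of the CFL factorization, and second, that any Lyndon subword of a pure power $m^k$ of a Lyndon word $m$ is itself a subword of $m$. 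Together these force $u$ to be a subword of some $m_i$, contradicting $m_i \in N$. The stated basis set then matches $\mathfrak{N}$ term by term via the CFL correspondence.

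For part (2), the implication (i) $\Rightarrow$ (ii) follows directly from (1): writing $N = \{l_1 > \cdots > l_d\}$, the basis $\mathfrak{N}$ is parametrized by $(\alpha_1, \ldots, \alpha_d) \in \ZZ_{\geq 0}^{d}$ via $l_1^{\alpha_1} \cdots l_d^{\alpha_d}$, yielding
\[
H_A(t) = \prod_{i=1}^{d} \frac{1}{1 - t^{|l_i|}} \quad \text{and} \quad GK\dim A = d.
\]
For (ii) $\Rightarrow$ (i) I would argue contrapositively: if $N$ is infinite, pick atoms $l^{(1)}, l^{(2)}, \ldots \in N$ of strictly increasing lengths; since $H_A(t) = \prod_{l \in N} 1/(1-t^{|l|})$ factors as $\bigl(\prod_{j=1}^k 1/(1-t^{|l^{(j)}|})\bigr) \cdot Q(t)$ with $Q(t)$ a formal power series with non-negative coefficients and $Q(0)=1$, the coefficients of $H_A$ dominate those of the partial product, whose $n$-th coefficient grows like $c_k n^{k-1}$. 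Since $k$ is arbitrary, no polynomial bound on $\dim_K A_n$ can hold. The remaining equivalences (i) $\Leftrightarrow$ (iii) $\Leftrightarrow$ (iv) follow from standard PI theory: the finite PBW basis in (i) gives $A$ finite Shirshov height, hence $A$ is PI; an affine PI algebra embeds in a matrix ring over a commutative affine $K$-algebra; and a subalgebra of a matrix ring is PI by Amitsur--Levitzki and therefore has polynomial growth.

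The main obstacle is proving the two Lyndon-word lemmas that underpin the equivalence in (1), as each requires delicate case analysis combining the lexicographic order, the minimality of a Lyndon word among its cyclic conjugates, and its strict inequality with every proper suffix. Once this combinatorial core is settled, the Hilbert series formula, the growth dichotomy, and the PI-theoretic equivalences all follow routinely from the PBW-type basis.
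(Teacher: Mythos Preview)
Your approach to part (1) and to (i)$\Leftrightarrow$(ii) in part (2) is essentially the paper's. The paper packages your two lemmas into a single statement (its Lemma~\ref{MonLynLemFactorization}): if $l_1 \geq l_2 \geq \cdots \geq l_s$ are Lyndon words, repetitions allowed, then any Lyndon subword of $l_1 l_2 \cdots l_s$ is already a subword of some $l_i$. The proof is a one-line consequence of the overlap inequality (Lemma~\ref{MonLynLemOverlap}(\ref{MonLynLemOverlap3})): if a Lyndon word $u$ straddles two factors $l_i$ and $l_j$ with $i<j$, then $l_i < u < l_j$, contradicting $l_i \geq l_j$. This handles the pure-power case and the distinct-block case simultaneously, so there is no need to split into two lemmas; but your decomposition is of course equivalent.

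There is, however, a genuine gap in your sketch of (iii)$\Rightarrow$(iv). The assertion that every affine PI algebra embeds in a matrix ring over a commutative affine $K$-algebra is false in general: non-representable affine PI algebras exist, so this step cannot be deduced from ``standard PI theory'' alone. The paper does not attempt to prove (ii)$\Leftrightarrow$(iii)$\Leftrightarrow$(iv) directly either; it simply quotes Borisenko~\cite{borisenko85}, whose result is specific to finitely presented \emph{monomial} algebras and relies on their combinatorial structure rather than general PI machinery. Your implications (i)$\Rightarrow$(iii) via finite Shirshov height, (iv)$\Rightarrow$(iii) via Amitsur--Levitzki, and (iii)$\Rightarrow$(ii) via Shirshov's height theorem are fine, and together with your (ii)$\Rightarrow$(i) they already give (i)$\Leftrightarrow$(ii)$\Leftrightarrow$(iii) and (iv)$\Rightarrow$(iii). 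But closing the loop at (iii)$\Rightarrow$(iv) requires either invoking Borisenko's theorem or constructing a faithful matrix representation of $A$ directly from the PBW basis $\{l_1^{\alpha_1}\cdots l_d^{\alpha_d}\}$.
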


\begin{theoremB}
\label{theoremb}
\begin{enumerate}
\item
 \label{MonLynTheorem1}
 $A = K \asX /(W)$ is a standard finite presentation  \emph{iff} $W$ is
finite.
\item
 \label{MonLynTheorem2}
Suppose $W$ is a finite set of order $|W|=r$, and  $m$ is the
maximal length of words in $W$. Then:
\begin{itemize}\item[(i)] the
global dimension of $A$ is finite and equals at most $r+1$;
\item[(ii)] the algebra $A$ has either polynomial growth or it
contains a free subalgebra generated by two monomials; \item[(iii)]
$A$ has polynomial growth \emph{iff } every word $l$ in $N$ has
length $|l| \leq m-1$.
\end{itemize}
\item
\label{MonLynTheorem4} Suppose
 $A$ has polynomial growth of degree $d$. Then $A$ has finite global
 dimension $d$, and $W$ is of finite order with
  \[d-1 \leq |W|
\leq d(d-1)/2,\] so $A$ is standard finitely presented. Furthermore, the following conditions are
equivalent:
\begin{itemize}\item[(i)]
$|W| = d(d-1)/2$;
\item[(ii)]
$W = \{x_i x_j \mid 1 \leq i < j \leq d\}$;
\item[(iii)]
$N= X$.
\end{itemize}
\end{enumerate}
\end{theoremB}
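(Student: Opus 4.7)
The plan is to prove the three parts in sequence, leveraging Theorem~A (polynomial growth iff $N$ finite, together with the PBW basis in that case) and Anick's combinatorial characterization of global dimension via $n$-chains (Fact~\ref{fact_gldim}). Part~(1) is essentially tautological: a standard finite presentation requires both $X$ and $W$ to be finite, and $|X|=g$ is finite by hypothesis. For part~(2)(i), I would use the $n$-chain criterion together with the observation that Lyndon aperiodicity forces each chain to use each relation $w\in W$ at most once, since a repeated occurrence would manufacture a periodic overlap incompatible with Lyndon words; chains thus have length at most $r+1$, bounding the global dimension by $r+1$. Part~(2)(ii) then follows by applying Anick's dichotomy theorem for monomial algebras of finite global dimension. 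For part~(2)(iii), the backward direction is immediate since bounding the atom length by $m-1$ places $N$ inside the finite set of Lyndon words of length $\leq m-1$, so Theorem~A yields polynomial growth. The forward direction I would argue contrapositively: given an atom $l\in N$ with $|l|\geq m$, I would build infinitely many atoms by iterated Lyndon concatenation with suitably chosen smaller atoms, using that any hypothetical new $W$-subword of a concatenation $l\cdot l'$ must straddle the junction and hence lie within a window of length $\leq m-1$ on each side, a window one can always avoid by careful choice of $l'$.

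In part~(3), assuming polynomial growth of degree $d$, Theorem~A gives $|N|=d$ and the PBW basis, so $GK\dim A=d$, and the length bound from (2)(iii) forces $W$ to be finite. Equating the Euler characteristic of the Anick resolution to
\[
H_A(t)^{-1}=\prod_{i=1}^d(1-t^{|l_i|})
\]
then pins down the global dimension as exactly $d$. For the upper bound $|W|\leq\binom{d}{2}$, I would define the map $\Phi\colon W\to N\times N$ sending a relation $w$ to its standard Lyndon factorization $(u,v)$ with $w=uv$ and $u<v$; uniqueness of the standard factorization and the fact that proper subwords of the $W$-minimal $w$ are atoms ensure $\Phi$ lands injectively in $\{(u,v)\in N\times N:u<v\}$, a set of cardinality $\binom{d}{2}$. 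The lower bound $|W|\geq d-1$ I would establish by induction on $d$, matching coefficients in the Hilbert-series identity above so that each new non-letter atom beyond the alphabet forces at least one fresh relation.

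Finally, for the equivalence (i)$\Leftrightarrow$(ii)$\Leftrightarrow$(iii): (ii)$\Rightarrow$(iii) is a direct verification that the normal Lyndon words under $W=\{x_ix_j:i<j\}$ are exactly the letters $x_i$; (iii)$\Rightarrow$(ii) uses the classical fact that a Lyndon word of length $\geq 2$ begins with a strict ascent $x_ix_j$ with $i<j$, so $N=X$ forces every such ascent into $W$ and the antichain property excludes any longer Lyndon word from $W$; and (i)$\Leftrightarrow$(ii) follows because equality in the injection $\Phi$ forces every pair of atoms to be the standard factorization of a relation, and the minimality of $W$ then collapses all atoms to letters. The main obstacles I anticipate are the forward direction of (2)(iii), which requires a delicate construction producing an infinite family of atoms from a single long one, and the lower bound $|W|\geq d-1$ in (3), where the combinatorial-to-homological dictionary via Anick's resolution must be implemented with some care.
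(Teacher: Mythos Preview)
Your overall architecture is right, and several pieces (part~(1), the upper bound $|W|\le\binom{d}{2}$ via the standard-factorization injection, and the equivalences in~(3)) match the paper closely. But the two places you yourself flag as obstacles are genuine gaps, and both are handled in the paper by a single lemma you are missing: when $N=\{l_1<\cdots<l_d\}$ is finite, the product $l_il_{i+1}$ of \emph{consecutive} atoms lies in $W$ (Lemma~\ref{TGIlemma}, cited from \cite{TGI0612}). The reason is that $l_i<l_il_{i+1}<l_{i+1}$, so $l_il_{i+1}$ cannot be an atom; hence some $w\in W$ is a subword, and if $w$ were proper it would straddle and satisfy $l_i<w<l_{i+1}$, which one then rules out. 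This lemma gives both the lower bound $|W|\ge d-1$ and, since $|l_il_{i+1}|\le m$, the length bound $|l_i|\le m-1$ for (2)(iii). Your proposed workarounds do not go through: for (2)(iii), the iterated-concatenation idea founders because a $W$-word straddling the junction of $l\cdot l'$ is only constrained to lie lexicographically between $l$ and $l'$ (Lemma~\ref{MonLynLemOverlap}(\ref{MonLynLemOverlap3})), not to be a subword of either; for instance with $W=\{x_1x_1x_2\}$ and the long atom $l=x_1x_2x_2$ one has $x_1\cdot l\notin N$, and analogous obstructions recur for other natural choices of $l'$. For the lower bound, coefficient matching in $H_A(t)^{-1}=\prod(1-t^{|l_i|})$ is obstructed by cancellations among $n$-chains of different $n$ and does not isolate~$|W|$.

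Two smaller issues. First, your Euler-characteristic argument does not pin down $\mathrm{gl}\dim A=d$: the alternating sum $H_A(t)^{-1}$ does not determine the length of the minimal resolution. The paper either invokes a direct construction of a $(d-1)$-chain from \cite{TGI0612} or, as a second argument, applies Anick's Theorem~6 (finite global dimension plus no free two-generated subalgebra implies $\mathrm{gl}\dim=GK\dim$); since you already use Anick's dichotomy for (2)(ii), the latter route would patch this step. Second, in (2)(i) your aperiodicity remark only excludes \emph{adjacent} repetitions $u_i=u_{i+1}$; the clean fix (and the paper's actual argument, Proposition~\ref{nchainprop}) is that overlapping Lyndon words satisfy a strict inequality, so $u_1<u_2<\cdots<u_n$ and all $u_i$ are distinct. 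Finally, in (iii)$\Rightarrow$(ii) the claim that every Lyndon word of length~$\ge 2$ \emph{begins} with a strict ascent $x_ix_j$ is false (consider $x_1x_1x_2$); the correct statement, which suffices, is that it \emph{contains} such a subword (Corollary~\ref{nicecorollary}).
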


\begin{corollary}
\label{corThB} $A$ has polynomial growth of degree $d$ if and only
if $A$ has global dimension $d$ and does not contain a free
subalgebra generated by two monomials.
\end{corollary}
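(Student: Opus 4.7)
The forward direction is almost immediate. By Theorem B\,(3), polynomial growth of degree $d$ already yields finite global dimension equal to $d$. Furthermore, a free subalgebra generated by two monomials $y_1,y_2 \in X^+$ would force $\dim_K A_n \geq 2^{\lfloor n/\max(|y_1|,|y_2|)\rfloor}$ for $n$ large, contradicting the polynomial bound on $\dim_K A_n$.

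For the converse, assume $A$ has finite global dimension $d$ and contains no free subalgebra on two monomials. My plan has three steps. First, I would show that $W$ must be finite. The tool is Fact~\ref{fact_gldim}, Anick's characterization of global dimension in terms of $n$-chains over the obstruction set $W$. Lyndon words admit a canonical standard factorization that makes $n$-chain construction especially transparent, so the plan is to argue that an infinite antichain $W$ of Lyndon words in the finite alphabet $X$ produces $n$-chains of unbounded length --- built recursively from longer and longer obstructions in $W$ --- thereby forcing infinite global dimension and contradicting the hypothesis. Once $W$ is known to be finite, Theorem B\,(2)(ii) supplies the dichotomy: either $A$ has polynomial growth, or it contains a free subalgebra on two monomials. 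The assumption excludes the latter, so $A$ has polynomial growth, of some degree $d'$. Finally, Theorem A yields $|N|=d'$ and $GK\dim A = d'$, while Theorem B\,(3) then forces the global dimension of $A$ to equal $d'$; combined with the hypothesis that it equals $d$, we conclude $d = d'$, as required.

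The main obstacle is the first step, namely extracting finiteness of $W$ from finite global dimension without appealing to Anick's Theorem~6 in \cite{Anick85} (whose content in our setting is essentially what we wish to prove). This reduction has to be carried out through the $n$-chain combinatorics developed for Lyndon obstructions in Section~\ref{GlobalDimension}. Once it is in place, the remaining two steps are clean bookkeeping applications of Theorems~A and~B, and the corollary follows.
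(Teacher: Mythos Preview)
Your forward direction is correct and matches the paper. The issue is the converse.

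The paper's proof of the converse is a one-line citation: it invokes Anick's Theorem~6 from \cite{Anick85}, which states for \emph{arbitrary} monomial algebras that finite global dimension together with the absence of a two-generated free monomial subalgebra forces polynomial growth of the same degree. Your worry that this is ``essentially what we wish to prove'' is misplaced: Anick's result is external, proved for the general class of monomial algebras, and does not depend on anything in this paper. Citing it is not circular; it simply makes the corollary a specialization of a known theorem, which is exactly how the authors treat it.

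Your alternative route, attempting to stay independent of Anick, has a genuine gap at Step~1. You assert that an infinite antichain $W$ of Lyndon words must produce $n$-chains of unbounded length, but this is nowhere established in the paper and your sketch does not prove it. Proposition~\ref{nchainprop} gives only the opposite bound: if $|W|=r$ then there are no $(r+1)$-chains. The existence of long chains requires successive \emph{overlaps} among elements of $W$, and nothing in Section~\ref{GlobalDimension} shows that an infinite Lyndon antichain must contain arbitrarily long overlapping strings. The result from \cite{TGI0612} quoted in the proof of Theorem~B (existence of a $(d-1)$-chain when $|N|=d$) goes through $N$, not $W$, and already presupposes $N$ finite. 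So as written your Step~1 is a hope rather than an argument, and without it Steps~2 and~3 cannot be reached. If you want an Anick-free proof, this is the lemma you would actually have to prove.
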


\section{Background and Motivation}
\label{motivation&background}
 It was shown by the first author,
\cite{TGI0512}, Theorem 1.1, that an arbitrary finitely presented
monomial algebra $A^0  = K\langle x_1 \cdots, x_n \rangle
/(W)$ with quadratic monomial relations $W$ has polynomial
growth and finite global dimension if and only if there is a
(possibly new) enumeration of the generating set $X =\{x_1 \cdots,
x_n \}$,  so that  $X = \{y_1>y_2> \cdots > y_n \}$ and $W =
\{y_jy_i\mid 1 \leq i \leq j \leq n)$,  that is $W$ consists of all
Lyndon words of length $2$ (w.r.t the new ordering). We believe it
is interesting to know that assuming only quadratic monomial
relations $W$, but no restrictions of their shape, or number,
together with certain algebraic properties  (see the equivalent
conditions (1) $\cdots$ (6) below) lead to exactly $\binom{n}{2}$
defining relations $W$, each of which is a Lyndon word of length
$2$.  For convenience of the reader we give the precise result
which, of course, agrees with the general results of this paper.

\begin{theorem} \cite{TGI0512}
\label{monomialalgebrath} Let $A^0  = K\langle x_1 \cdots, x_n
\rangle /(W)$  be a quadratic monomial algebra. The
following conditions are equivalent:
\begin{enumerate}
\item
 $A^0$ has finite global
dimension and polynomial growth. \item  $A^0$ has finite global
dimension and $|W|= \binom{n}{2}$. \item  $A^0$ has polynomial
growth, $W$ contains no square $x_i^2$, and $|W|=
\binom{n}{2}$. \item The Hilbert series of $A^0$ is
\[
H_{A^0}(z)= \frac{1}{(1-z)^n}.
\]
\item
 There is a permutation $y_1, \cdots, y_n$ of $x_1, \cdots, x_n$ such
 that the set
\[
 \mathfrak{N}= \{y_1^{{\alpha}_1}\cdots y_n^{{\alpha}_n}\mid {\alpha}_i \geq 0, \; 1 \leq i \leq n\}.
 \]
 is a $K$-basis of $A^0$.
\item There is a permutation $y_1, \cdots, y_n$ of $x_1, \cdots, x_n,$
such that
\[W= \{y_jy_i \mid \; 1 \leq i < j \leq n\}.
 \]
 \end{enumerate}
\end{theorem}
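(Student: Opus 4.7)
My plan is to establish the six equivalences via a convenient cycle of implications, isolating the main combinatorial content in one step. First I handle the easy implications. For $(6) \Rightarrow (5)$: a monomial $y_{i_1} y_{i_2} \cdots y_{i_m}$ is $W$-normal iff it contains no factor $y_j y_i$ with $j > i$, i.e.\ iff $i_1 \leq i_2 \leq \cdots \leq i_m$, so the $W$-normal words are precisely the ones listed in (5). Then $(5) \Rightarrow (4)$ is immediate from
\[
H_{A^0}(z) = \sum_{\alpha_1, \ldots, \alpha_n \geq 0} z^{\alpha_1 + \cdots + \alpha_n} = \frac{1}{(1-z)^n}.
\]

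From $H_{A^0}(z) = 1/(1-z)^n$ I read off $\dim A^0_1 = n$ and $\dim A^0_2 = \binom{n+1}{2}$, hence $|W| = n^2 - \binom{n+1}{2} = \binom{n}{2}$; the expansion $\dim A^0_k = \binom{n+k-1}{k}$ is polynomial in $k$ of degree $n-1$, so $A^0$ has polynomial growth and GK dimension $n$. These observations supply (2) and (3); for the ``no square'' clause in (3) I check that if some $x_i^2 \in W$ then under the $\binom{n}{2}$-relation budget the Hilbert series is forced to miss the symmetric expression $1/(1-z)^n$.

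The crux is the implication from any of (1), (2), (3) to (6). I associate to $W$ the directed graph $\Gamma$ on vertex set $X$ with an arc $x_i \to x_j$ iff $x_i x_j \notin W$; then $W$-normal monomials of length $k$ correspond bijectively to directed walks of length $k-1$ in $\Gamma$, and $\dim A^0_k$ equals the number of such walks. The count $|W| = \binom{n}{2}$ together with the absence of squares forces $\Gamma$ to be a tournament with a loop at every vertex. The main obstacle is then the purely graph-theoretic lemma: \emph{a tournament with universal loops has polynomially-bounded walk count if and only if it is transitive.} The hard direction is as follows: a nontransitive tournament contains a directed $3$-cycle $x \to y \to z \to x$; combining the three cycle arcs with the three loops, at each of $k$ consecutive steps one has two independent choices (stay via the loop or advance), yielding $2^k$ distinct walks and exponential growth. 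Once $\Gamma$ is known to be transitive, its linear order (reversed) supplies the permutation $y_1, \ldots, y_n$ required by (6), with forbidden pairs exactly $\{y_j y_i : i < j\}$.

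Finally, $(6) \Rightarrow (1)$ follows from an explicit Anick-type free resolution of $A^0$ of length $n$ (the skew-polynomial Koszul complex associated to the permutation), exhibiting finite global dimension. The only genuine obstacle is the tournament lemma; the rest is bookkeeping once the correct ordering of the generators is in hand.
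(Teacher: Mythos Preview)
The paper does not prove this theorem here; it is quoted from \cite{TGI0512} as background, so there is no ``paper's own proof'' to compare against. I will therefore comment on the internal soundness of your sketch.

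Your central idea---encode $W$ by the Ufnarovski digraph $\Gamma$ on $X$, recognise that $|W|=\binom{n}{2}$ plus ``no squares'' makes $\Gamma$ a tournament with all loops, and then show that polynomial walk-growth forces transitivity---is correct and is exactly the combinatorial heart of the result. The implications $(6)\Rightarrow(5)\Rightarrow(4)$ and $(6)\Rightarrow(1)$ are fine.

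The gaps are in how you connect (1), (2), (4) to the tournament hypothesis. First, from $H_{A^0}(z)=1/(1-z)^n$ you correctly extract $|W|=\binom{n}{2}$ and polynomial growth, but this does \emph{not} give finite global dimension, so your claim that ``these observations supply (2)'' is unjustified; you must route (4) through (3) or (6) instead. Second, your tournament lemma, as stated, needs polynomial growth to conclude transitivity, but condition (2) does not provide it; to handle $(2)\Rightarrow(6)$ you need a separate observation: a directed $3$-cycle $y_j y_i,\,y_k y_j,\,y_i y_k\in W$ produces $n$-chains $y_j y_i y_k y_j y_i\cdots$ of every length, contradicting finite global dimension. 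Third, for $(1)\Rightarrow(6)$ you invoke ``$|W|=\binom{n}{2}$ together with the absence of squares'', but (1) gives neither directly. You must argue: finite global dimension forbids $x_i^2\in W$ (else $x_i^{k+1}$ is a $k$-chain) and forbids having both $x_ix_j,x_jx_i\in W$ (else $(x_ix_j)^k x_i$ is a $k$-chain), giving $|W|\le\binom{n}{2}$; and polynomial growth, with all loops present, forbids having both $x_ix_j,x_jx_i$ normal (two intersecting cycles), giving $|W|\ge\binom{n}{2}$. Finally, your argument that (4) forces ``no squares'' is only a gesture; the clean way is to note that a missing loop forces, by the arc count $n^2-\binom{n}{2}$, some pair $i\neq j$ with both arcs $x_i\leftrightarrow x_j$, and then check this still yields exponential growth, contradicting (4). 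Once these bridges are supplied, your cycle closes.
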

One can consider Corollary \ref{corThB} as a generalization of  the
equivalence of conditions (2) and (3) for Lyndon-type relations of
arbitrarily high degrees.

It is shown in \cite{TGI0512}, Theorem 1.2. that each of the
monomial algebras $A^0$ as above share the same obstruction set $W$
with various noncommutative quadratic algebras with binomial
relations which are Artin-Schelter regular,  produce solutions of
the Yang-Baxter equation, and have all good properties of the ring
of commutative polynomials, like being Koszul and Noetherian
domains.

\begin{example}
Let  $A = K \langle x,y,z \rangle /(xz, zy, xxy, xyy, zxy)$. Then
$\omega= xzxyy$ is a $3$-chain, but there are no $4$-chains on $W$
so $A$ has global dimension $4$. One uses the Ufnarovski graph
$\Gamma(A)$, \cite{Ufnarovski82}, (see also
Section \ref{Sec_determinimgPolyGrowth}) to verify that $A$ has
polynomial growth of degree $4$, the cycles $\Gamma(A)$ correspond
to the atoms (in the sense of Anick). In this case the  atoms are $y
> xy > z
> x$, and the normal $K$-basis of $A$ is the set
\[\mathfrak{N}=  \{y^{\alpha_1}(xy)^{\alpha_2}z^{\alpha_3}x^{\alpha_4}\mid \alpha_i \geq 0, 1 \leq i \leq 4\}\]
An easy computation verifies that there is no ordering $<$ on the
alphabet $X =\{x,y,z\} $ such that each of the monomial relations is
a Lyndon word w.r.t. $<$.
\end{example}

One can extract from the proof of Theorem 6 in \cite{Anick85} the
following.
\begin{remark}
Let $A$  be a monomial algebra with global dimension $d$ and
polynomial growth. Suppose $Y = \{y_1,y_2, \cdots, y_d\}$ is the set
of its atoms (in the sense of Anick) enumerated
according Anick's total
order ``$\rightarrow$": $y_d \rightarrow y_{d-1}\rightarrow \cdots
\rightarrow y_2\rightarrow y_1$, see \cite{Anick82}. Then $A$ has a $K$-basis
\begin{equation}
\label{newpres_eq} \mathfrak{N} = \{y_1^{k_1}y_2^{k_2} \cdots
y_d^{k_d}\mid k_j \geq 0, 1 \leq j \leq d\}. \end{equation}
 This induces a new presentation of $A$ in terms of a new
generating set $Y$ and new relations $W_0$:
\[A  \simeq K \langle Y\rangle /(W_0),\]
where $W_0 =\{y_jy_i \mid 1 \leq i < j \leq d\}$. The ordering is
"reverse" to the enumeration, i.e.  $y_j\rightarrow y_i$ \emph{iff}
$j>i$, so the new relations are Lyndon words in the alphabet $Y$,
with total ordering $\rightarrow$. However, whenever the original
set of relations $W$ contains a monomial of degree $> 2$, some of
the new generators (atoms) have degree $> 1$.
\end{remark}

\section{The normal bases  of algebras defined by Lyndon words}
\label{thenormabasis}

In this section we prove some results on Lyndon words which are
essential for the paper. We then describe the normal basis of an
algebra defined by Lyndon words, and prove Theorem A.

We start with some basic facts about Lyndon words, our main
reference is \cite{Lothaire},  Section 5.1., and \cite{Lothaire02},
Section 11.5. (for Lyndon's theorem). As usual, $L$ denotes the set of all
Lyndon words in the alphabet $X$.

\begin{fact}
\label{fact_lyndonwords}
\begin{enumerate}
\item
\label{fact_lyndonwords2} A word $l$ is a Lyndon word if and only if
$l < b$ for any proper right segment $b$ of $l$.
\item
\label{fact_lyndonwords3} For all $w \in L$, the equality $w = ab$,
with $a,b\in X^{+}$, implies $a < w < b.$
\item
\label{fact_lyndonwords1} If $a<b$ are Lyndon words, then $ab$ is a
Lyndon word, so $a < ab < b$.
\item
\label{fact_lyndonwords4} If $b$ is the longest proper right segment
of $l$ which is a Lyndon word, then $l = ab$, where $a$ is a Lyndon
word. This is called \emph{the  standard factorization of $l$} and
denoted as $(a,b)$.
\item
\label{fact_lyndonwords5} If $(a,b)$ is a standard factorization of
a Lyndon word and $c$ is a Lyndon word with $ab < c \leq b$, then
$abc$ is a Lyndon word with standard factorization $(ab, c)$.
\item
\label{fact_lyndonwords6} (Lyndon's Theorem )
 Any word $w \in X^{+}$ can be written uniquely as a nonincreasing product
$w = l_1
l_2 \cdots l_s$ of Lyndon words.
\end{enumerate}
\end{fact}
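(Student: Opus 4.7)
The plan is to establish items (1)--(6) in sequence, since each feeds into the next. I will lean on the basic lex-order properties L1 and L2, with occasional induction on word length, following the standard Lothaire development.

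For (1), the forward direction uses that $l=ab$ Lyndon gives $l<ba$ by the minimality of $l$ in its conjugacy class; when the first letter of $a$ differs from the first letter of $b$ one concludes $l<b$ from L2, otherwise one strips the common prefix by L1 and inducts on $|l|$. The converse is proved by the same case split. Item (2) is then immediate from (1) together with the observation that $l$ has $a$ as a proper left factor, so $a<l$ by the lex definition and $l<b$ by (1). For (3), I verify that $ab$ is smaller than each of its proper right segments $v$ by splitting on whether $v$ is a suffix of $b$ or has the form $cb$ with $c$ a nonempty proper right segment of $a$; in the first case (1) applied to $b$ gives $b<v$ and one chains with $ab<b$, in the second (1) applied to $a$ gives $a<c$ and L2 gives $ab<cb$. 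The delicate subcomparison $ab$ vs.\ $b$ is handled as follows: if $a$ is not a prefix of $b$, apply L2; if $a$ is a prefix of $b$, write $b=au$ and combine the Lyndon-ness of $b$ (giving $b<u$) with L1 cancellation to conclude $ab<b$.

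For (4), take $b$ to be the longest Lyndon proper right segment of $l$ (which exists since single letters are Lyndon), set $l=ab$, and rule out any nontrivial Lyndon factorization of $a$: if $a=a_1a_2$ with $a_2$ Lyndon, comparing $a_2$ with $b$ and applying either (3) or the conjugate-minimality of $l$ itself contradicts either the Lyndon property of $l$ or the maximality of $|b|$. For (5), item (3) already shows $abc$ is Lyndon since $ab<c$ and both are Lyndon, so the content is that $c$ is the \emph{longest} Lyndon right segment of $abc$: any longer candidate would be $a'c$ with $a'$ a nonempty suffix of $ab$, and a case split on whether $a'$ lies inside $b$, equals $b$, or crosses the $(a,b)$ boundary yields a contradiction with the hypothesis $c\leq b$ and with $(a,b)$ being the standard factorization of $ab$.

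For (6), I use strong induction on $|w|$. For \emph{existence} I take $l_1$ to be the longest Lyndon prefix of $w$, write $w=l_1w'$, and apply induction to $w'$ to obtain $w=l_1l_2\cdots l_s$ with $l_2\geq\cdots\geq l_s$ all Lyndon; the desired inequality $l_1\geq l_2$ follows from the maximality of $l_1$, because if $l_1<l_2$ then by (3) $l_1l_2$ would be Lyndon, giving a strictly longer Lyndon prefix. For \emph{uniqueness}, comparing $l_1\cdots l_s=l_1'\cdots l_t'$ one shows $l_1=l_1'$: if, say, $|l_1|>|l_1'|$ then $l_1'$ is a proper prefix of $l_1$, so by (2) applied to $l_1$ we get $l_1'<$ (a right segment of $l_1$), contradicting the nonincreasing chain $l_1'\geq l_2'\geq\cdots$ whose total concatenation must cover $l_1$ and more. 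The main obstacle throughout is item (1): it underpins every subsequent argument, and the subcase where one word is a prefix of another is not handled directly by L2 and requires the auxiliary L1 plus induction. Items (5) and (6) involve the most case analysis, but with (1)--(4) in hand the arguments are routine.
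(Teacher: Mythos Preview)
The paper does not supply its own proof of this Fact; it is stated as background, with an explicit pointer to Lothaire, \S5.1, and Lothaire (2002), \S11.5 for Lyndon's theorem. Your plan is precisely the standard Lothaire development --- prove (1) first and feed it into (2)--(6) --- so there is nothing to compare against except the cited source, which your outline tracks closely.

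One caution on the forward direction of (1): ``strip the common prefix by L1 and induct on $|l|$'' does not set up a clean induction, because after removing the shared first letter the remaining word need not be Lyndon, so there is no obvious inductive hypothesis to invoke. The textbook argument instead compares $ab$ with $b$ directly: either they disagree at some position $i\le|b|$, in which case $ab<ba$ forces $(ab)_i<b_i=(ba)_i$ and hence $ab<b$; or $b$ is a prefix of $ab$, so $b$ is simultaneously a proper prefix and a proper suffix of the primitive word $l$, and one derives a contradiction from that (a Lyndon word is unbordered). Similarly, for (4) it is cleaner to use characterization (1) directly --- for a proper right segment $v$ of $a$, the word $vb$ is a proper right segment of $l$ strictly longer than $b$, and one combines $l<vb$ with the maximality of $|b|$ to get $a<v$ --- rather than to ``rule out Lyndon factorizations of $a$,'' which as phrased risks circularity with the very statement you are proving. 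Items (2), (3), (5), (6) in your plan are handled correctly.
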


\subsection{More results on Lyndon words}  \label{Lemmas}
Next we prove some technical results on Lyndon words (in general
context).

\begin{notation}
\label{overlapnotation} For monomials $a, b\in X^{+}$ we shall write
$\;\overbrace{a,b}\;$ if  $a = uv$, $b = vw$, where $u,w \in
X^{\ast}$, $v, uw \in X^{+}$ ($b = aw,$ or $a = ub$ is possible). In
the case when $u,v,w \in X^{+}$ we say that $a$ and $b$
\emph{overlap}.

We shall write $\overbrace{a, \omega, b}$ if  $\omega \sqsubset ab,$
and $\omega$ overlaps with both $a$ and $b$, so that
$\overbrace{a,\omega}$ and $\overbrace{\omega, b}.$ \end{notation}

\begin{lemma}
\label{MonLynLemOverlap}
\begin{enumerate}
\item
\label{MonLynLemOverlap0} If  $v$ is a proper right segment of $l
\in L$, then $v$  is not a left segment of $l$.
 In other words a monomial of
the  shape $l= va= bv$, $a,b, v \in X^{+}$ can not be a Lyndon word.
\item
\label{MonLynLemOverlap01} If  $v$ is a proper right segment of $l = uv
\in L$, then
\begin{equation}
\label{MonLynLem01eq}
lw = uvw  < vw \quad \text{for all}\; w \in X^{\ast}.
\end{equation}
\item
\label{MonLynLemOverlap1} Let $uv$ and $vw$ be Lyndon words. Then
$uvw$ is a Lyndon word.
\item
\label{MonLynLemOverlap2} If $a, b \in L$ and
 $\;\overbrace{a,b}\;$, then $a<b$.
\item
\label{MonLynLemOverlap3} Suppose that $a, b, w \in L$, with
$\overbrace{a, w, b}$. Then $a < w < b$.
\end{enumerate}
\end{lemma}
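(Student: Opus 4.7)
The plan is to prove the five parts sequentially, each leveraging the earlier ones together with the basic Lyndon properties recorded in Fact \ref{fact_lyndonwords} and properties L1, L2 of the lex order.

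For Part (1), I argue by contradiction: if $l = va = bv$ with $a,b,v \in X^{+}$ is a Lyndon word, then since $v$ is a proper right segment Fact \ref{fact_lyndonwords}(2) gives $l < v$; but $v$ is simultaneously a proper left segment of $l$, so the definition of lexicographic order forces $v < l$, a contradiction. Part (2) is then a one-line consequence: $l < v$ by Fact \ref{fact_lyndonwords}(2), and since $|l| > |v|$ the word $l$ cannot be a left segment of $v$, so property L2 yields $lw < vw$, i.e.\ $uvw < vw$.

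Part (3) is the core technical step. The boundary cases $u = 1$ or $w = 1$ reduce to the hypothesis, so assume $u, w \in X^{+}$. I verify that $uvw$ is minimal in its conjugate class by checking $uvw < c$ for every proper right factor $c$ of $uvw$, splitting into two regimes according to the length of $c$. If $|c| \leq |vw|$, then $c$ is a right factor of $vw$, and Part (2) applied to the Lyndon word $uv$ with right segment $v$ gives $uvw < vw$; if $c$ is a strictly proper right factor of $vw$, the Lyndon property of $vw$ gives $vw < c$, and transitivity closes the case. If $|c| > |vw|$, write $c = \alpha v w$ with $\alpha$ a nonempty proper right factor of $u$; then $\alpha v$ is a proper right factor of $uv$, so $uv < \alpha v$ by the Lyndon property of $uv$, and since $|uv| > |\alpha v|$ the word $uv$ is not a left segment of $\alpha v$, so L2 lifts this to $uvw < \alpha v w = c$.

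For Part (4), the degenerate cases $u = 1$ and $w = 1$ are immediate from the lex order and Fact \ref{fact_lyndonwords}(2), respectively. In the genuine overlap case $u, w \in X^{+}$, Part (3) gives that $uvw$ is Lyndon; then $a = uv$ is a proper left factor of $uvw$, so $a < uvw$ from the lex order, while $b = vw$ is a proper right factor of the Lyndon word $uvw$, giving $uvw < b$ by Fact \ref{fact_lyndonwords}(2); hence $a < b$. Part (5) then follows by applying Part (4) to the two overlapping pairs $(a, w)$ and $(w, b)$. The main obstacle is the case analysis in Part (3): one must be sure that every proper right factor of $uvw$ falls into one of the two regimes above, and the asymmetry of L2 (the hypothesis "$l$ is not a left segment of $v$") is what ultimately forces the length bookkeeping I have described.
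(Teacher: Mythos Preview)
Your proof is correct and follows essentially the same route as the paper's: the same contradiction for Part~(1), the same three-case analysis on proper right factors for Part~(3), and the same reduction of Parts~(4) and~(5) to the earlier parts. The only cosmetic difference is in Part~(2), where you justify the applicability of L2 by the length inequality $|l|>|v|$ (so $l$ cannot be a prefix of $v$) rather than by invoking Part~(1); this is a harmless simplification.
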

\begin{proof}
\ref{MonLynLemOverlap0}. Let $w \in L$, and suppose   $v$ is  a
proper right segment of $w$. Fact \ref{fact_lyndonwords} implies
$w <  v$. If we assume that $v$ is
also a proper left segment of $w$, then one has $v<w$, and
therefore  $v<w<v$, which is impossible.

\ref{MonLynLemOverlap01}. Let  $v$ be a proper right segment of $l
\in L$, then $l =uv < v$,  by Fact \ref{fact_lyndonwords}.
Moreover, $l$ is not in  $vX^{+}$, by part \ref{MonLynLemOverlap0}, so L2 implies
(\ref{MonLynLem01eq}).

 \ref{MonLynLemOverlap1}.
Assume that $uv, vw \in L$, and note first that since $v$ is  a proper right segment of the Lyndon word $l = uv$,
(\ref{MonLynLem01eq}) is in force for all $w\in X^{+}$.

We have to show that $uvw$ is a Lyndon word, so by Fact
\ref{fact_lyndonwords} (\ref{fact_lyndonwords2}) it will be enough
to verify that $uvw <b$ holds whenever $b$ is a proper right segment
of $uvw$. Three cases are possible: i) $b$ is a proper right segment
of $vw$; ii) $b= vw$; iii) $b= cvw,$ where $c \in X^{+}$ is a proper
right segment of $u$. Assume (i) holds. Fact \ref{fact_lyndonwords}
implies $vw < b$ which together with (\ref{MonLynLem01eq}) implies
$uvw < vw < b$. In case ii)  the relation (\ref{MonLynLem01eq})
gives straightforwardly $uvw < b =vw$. Assume iii) holds. The
monomial  $cv$ is a proper right segment of the Lyndon word $uv$,
therefore part (\ref{MonLynLemOverlap01}) implies
\begin{equation}
\label{lemmaeq1} uvw < cvw = b, \quad \quad  \forall \; w\in X^{\ast}.
\end{equation}
We have verified part (\ref{MonLynLemOverlap1}).

\ref{MonLynLemOverlap2}. By assumption  $a=uv\in L$, and $b = vw\in
L$, where $v, uw \in X^{+}$. If  $u=1$, then $w \neq 1$, so $a=v$ is
a proper left segment of $b$ and therefore $a < b$. Similarly, if $w
=1, b =v$, then $u \neq 1$, $a = uv = ub\in L$, hence $a < b$. If
$u, w \in X^{+}$, then by part (\ref{MonLynLemOverlap1}) $uvw$ is a
Lyndon word with a proper left segment $a$ and a proper right
segment $b$, hence $a < uvw < b$.

\ref{MonLynLemOverlap3}. By assumption $a, b, w \in L$ and
$\;\overbrace{a, w, b}\;$, hence (by definition)  $\overbrace{a, w}$
and $\overbrace{w, b}$ which, by part (\ref{MonLynLemOverlap2}),
implies $a< w$, and $w < b$.
\end{proof}

\begin{lemma}
\label{MonLynLemOverlap_Cor}
\begin{enumerate}
\item
\label{MonLynLemOverlap4} Let $a < b$ be Lyndon words. Then $a^kb^l$
are Lyndon words for all $k,l \geq 1$.
\item
\label{MonLynLemOverlap5} If $l = ab$ is the standard factorization
of the Lyndon word $l$, then  the standard factorization of $ab^k$ is
$(ab^{k-1}, b)$.
\end{enumerate}
\end{lemma}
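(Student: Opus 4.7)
For part (2), the plan is straightforward induction on $k \geq 1$, with the base case $k=1$ being exactly the hypothesis. For the inductive step, I would assume $ab^k$ is a Lyndon word with standard factorization $(ab^{k-1}, b)$ and then apply Fact \ref{fact_lyndonwords}(5) to this standard factorization, choosing $c = b$. The condition $c \leq b$ is trivially an equality, and the required strict inequality $ab^k < c = b$ follows instantly from Fact \ref{fact_lyndonwords}(2), because $b$ is a proper right segment of the Lyndon word $ab^k$ (proper since $|ab^k| = |a| + k|b| > |b|$). The conclusion of Fact \ref{fact_lyndonwords}(5) then gives exactly that $ab^k\cdot b = ab^{k+1}$ is Lyndon with standard factorization $(ab^k, b)$, closing the induction.

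For part (1), I would run a separate induction, this time on $k+l$, showing that $a^k b^l \in L$ whenever $k,l \geq 1$. The base case $k=l=1$ is precisely Fact \ref{fact_lyndonwords}(3). For the inductive step, I would split into two cases. If $k \geq 2$, I would write $a^k b^l = a\cdot (a^{k-1}b^l)$; the right factor is Lyndon by the inductive hypothesis, and since $a$ is a proper left segment of $a^{k-1}b^l$ we have $a < a^{k-1}b^l$, so Fact \ref{fact_lyndonwords}(3) produces $a^k b^l \in L$. If instead $k=1$ and $l \geq 2$, I would write $ab^l = (ab^{l-1}) \cdot b$; now $ab^{l-1} \in L$ by induction, and Fact \ref{fact_lyndonwords}(2) applied to the proper right segment $b$ of the Lyndon word $ab^{l-1}$ yields $ab^{l-1} < b$, so Fact \ref{fact_lyndonwords}(3) again delivers $ab^l \in L$.

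There is no real obstacle in either argument; both statements are tailor-made to fall out of the already established Facts \ref{fact_lyndonwords}(2), (3), and (5), combined with a mild bookkeeping induction. The only subtlety worth flagging is that part (2) cannot simply be deduced from part (1): the fact that $ab^k$ is Lyndon follows from part (1) by taking $k'=1$, $l'=k$, but the refined structural assertion that the \emph{standard} factorization of $ab^k$ is exactly $(ab^{k-1}, b)$ requires the stronger inductive hypothesis that is preserved through the use of Fact \ref{fact_lyndonwords}(5). This is why I would carry the two inductions independently rather than try to reduce one to the other.
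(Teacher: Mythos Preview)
Your proof is correct and follows essentially the same approach as the paper. The paper's argument for part (1) is an induction on $k$ and $l$ using Fact~\ref{fact_lyndonwords}(\ref{fact_lyndonwords1}) together with the inequalities $a < a^k b^l < b$, exactly as you do (only organized as a two-direction induction from $(k,l)$ rather than on $k+l$); for part (2) the paper simply writes ``this follows by induction and by Fact~\ref{fact_lyndonwords}(\ref{fact_lyndonwords5}),'' which is precisely the argument you spell out in detail.
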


\begin{proof}
\ref{MonLynLemOverlap4}. We use induction on $k$ and $l$. By
 Fact \ref{fact_lyndonwords} (\ref{fact_lyndonwords1}), $ab$ is a Lyndon word. Suppose some $a^kb^l$ is a
Lyndon word with $k,l \geq 1$. By Fact \ref{fact_lyndonwords}, part
(\ref{fact_lyndonwords3}), the product  $a(a^{k}b^l) = a^{k+1}b^l$ of
the Lyndon words  $a < a^k b^l$ is a Lyndon word. Similarly,  $a^k
b^l < b$  are Lyndon words, so $a^k b^{l+1}$ is also a Lyndon word.

\ref{MonLynLemOverlap5}. This follows by induction and by Fact
\ref{fact_lyndonwords} (\ref{fact_lyndonwords5}).
\end{proof}

\begin{lemma} \label{MonLynLemFactorization}
Let $l_1 \geq l_2  \geq \cdots \geq l_s$ be Lyndon words, $s \geq
2$. If a Lyndon word $u$ is a subword of $l_1 l_2 \cdots l_s$, then
$u$ is a subword of $l_i$, for some $i$, $1 \leq i \leq s.$
\end{lemma}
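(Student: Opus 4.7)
The plan is a short argument by contradiction. Assume $u$ is a Lyndon word occurring as a subword of $l_1 l_2 \cdots l_s$ but not contained in any individual $l_i$. The occurrence must then cross at least one factor boundary, so one can choose indices $i<j$ and write
\[ u \;=\; \alpha\, l_{i+1} l_{i+2} \cdots l_{j-1}\, \beta, \]
where $\alpha$ is a nonempty right segment of $l_i$ and $\beta$ is a nonempty left segment of $l_j$ (the middle block being empty when $j=i+1$).

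The key step is to derive both $u < l_i$ and $l_i < u$. For the upper bound, since $\alpha$ is nonempty, $\beta$ is a proper right segment of the Lyndon word $u$, so Fact~\ref{fact_lyndonwords}(\ref{fact_lyndonwords2}) gives $u < \beta$; and since $\beta$ is a left segment of the Lyndon word $l_j$, the definition of the lex order gives $\beta \leq l_j$. Combining these with the hypothesis $l_j \leq l_{j-1} \leq \cdots \leq l_i$ yields
\[ u \;<\; \beta \;\leq\; l_j \;\leq\; l_i. \]

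For the reverse inequality I would split on whether $\alpha$ equals $l_i$ or is a proper right segment of it. If $\alpha = l_i$, then $l_i$ itself is a proper left segment of $u$ (because $\beta$ comes after), so $l_i < u$ directly from the lex order. If $\alpha$ is a proper right segment of $l_i$, Fact~\ref{fact_lyndonwords}(\ref{fact_lyndonwords2}) gives $l_i < \alpha$, and $\alpha < u$ since $\alpha$ is a proper left segment of $u$; again $l_i < u$. In either case the two estimates contradict each other, forcing $u$ to lie inside a single $l_i$.

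I do not foresee any genuine obstacle: the only care needed is to verify that $\alpha$ and $\beta$ are honest \emph{proper} segments of $u$ (so that the Lyndon inequality $u<\beta$ and the trivial inequality $\alpha<u$ both apply), but this is immediate from $i<j$ together with the assumption that $u$ is not contained in any single factor.
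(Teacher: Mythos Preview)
Your argument is correct. You set up the factorization $u=\alpha\,l_{i+1}\cdots l_{j-1}\,\beta$ cleanly, and the two chains of inequalities $u<\beta\le l_j\le l_i$ and $l_i\le\alpha<u$ (the latter split into the cases $\alpha=l_i$ and $\alpha$ a proper suffix) are both valid uses of Fact~\ref{fact_lyndonwords}(\ref{fact_lyndonwords2}) together with the prefix clause in the definition of the lexicographic order. The transitivity step $l_i<\alpha<u\Rightarrow l_i<u$ is fine since $<$ is a total order.

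The paper argues a little differently. It observes that the occurrence of $u$ gives $\overbrace{l_i,u}$ and $\overbrace{u,l_j}$, and then quotes Lemma~\ref{MonLynLemOverlap}(\ref{MonLynLemOverlap3}) to conclude $l_i<u<l_j$, contradicting $l_i\ge l_j$. That lemma, in turn, relies on Lemma~\ref{MonLynLemOverlap}(\ref{MonLynLemOverlap1}), the nontrivial fact that if $uv$ and $vw$ are Lyndon words then so is $uvw$. Your route bypasses this ``overlap is Lyndon'' step entirely: you get the needed inequalities straight from the defining property of Lyndon words (smaller than every proper suffix) and the prefix rule for $<$. So your proof is slightly more self-contained, while the paper's is shorter on the page because the work has been front-loaded into Lemma~\ref{MonLynLemOverlap}.
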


\begin{proof}
Let $u\in L$ be  a subword of $l_1l_2\cdots l_s$ and assume that $u$
is not a subword of $l_i$ for $1 \leq i \leq s.$  Then there are
overlaps $\overbrace{l_i, u, l_j}$ for some $1 \leq i<j \leq s$,
hence, by Lemma \ref{MonLynLemOverlap} (\ref{MonLynLemOverlap3}) one
has $l_i < u < l_j$, which contradicts the hypothesis.
\end{proof}

\begin{corollary} \label{nicecorollary}
Every Lyndon word $a$ of length $\geq 2$ contains a subword of the
form $x_ix_j$, $1 \leq i < j\leq g$. \end{corollary}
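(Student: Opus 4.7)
The plan is to argue by contradiction, exploiting Lemma \ref{MonLynLemFactorization} with the Lyndon subword being $a$ itself, factored trivially into single letters.

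Assume for contradiction that $a = x_{i_1}x_{i_2}\cdots x_{i_n}$ is a Lyndon word with $n = |a| \geq 2$, and that $a$ contains no subword of the form $x_ix_j$ with $i < j$. Then every consecutive pair in $a$ satisfies $i_k \geq i_{k+1}$, so $i_1 \geq i_2 \geq \cdots \geq i_n$.

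Since $X \subset L$, each single letter $x_{i_k}$ is a Lyndon word. Hence $a = l_1l_2\cdots l_n$ where $l_k := x_{i_k}$ and $l_1 \geq l_2 \geq \cdots \geq l_n$ is a nonincreasing product of Lyndon words. Now $a$ is a Lyndon word that is a subword of itself (namely $a = l_1\cdots l_n$), so by Lemma \ref{MonLynLemFactorization}, $a$ must be a subword of some single factor $l_k$. But $|l_k| = 1 < 2 \leq |a|$, a contradiction.

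There is no real obstacle here; the lemma has been set up to make this a one-line consequence. The only thing to be careful about is that Lemma \ref{MonLynLemFactorization} requires $s \geq 2$, which holds since $n \geq 2$, and that it applies to the Lyndon word $a$ itself as the distinguished subword $u$.
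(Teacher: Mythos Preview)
Your proof is correct and follows essentially the same approach as the paper's own proof: assume the contrary, observe that the letters of $a$ must then be nonincreasing, regard $a$ as a nonincreasing product of single-letter Lyndon words, and apply Lemma~\ref{MonLynLemFactorization} with $u = a$ to reach a contradiction. Your added remark that the hypothesis $s \geq 2$ of the lemma is satisfied since $|a| \geq 2$ is a nice touch of care.
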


\begin{proof} Let $a\in L$ with $|a|\geq 2$, and assume, on the
contrary,  that $a$ does not contain any  subword $x_ix_j$, where $1
\leq i < j \leq g$. Then  $a = y_1 y_2 \cdots y_s$, where $y_1 \geq
y_2 \geq \cdots \geq y_s$ are in $X$. By definition $X \subset L$,
so the Lyndon word   $a$ is a (non proper) subword of the product
$y_1 y_2 \cdots y_s$ of non increasing Lyndon words $y_i\in L$.
Lemma \ref{MonLynLemFactorization} implies that $a$ is a subword of
some $y_i$, which is impossible.
\end{proof}

\subsection{The normal basis of $A$.}
In this subsection, as usual,  each of the sets $W$ and $N= N(W)$
may have arbitrary cardinality.
  Lemma \ref{MonLynLemFactorization} implies
straightforwardly the following.
\begin{lemma}
\label{normalbasislemma} Let $W$ be an antichain of Lyndon words
and let $N= N(W)$ be the corresponding set of W-normal Lyndon atoms
in $X^{+}$. Let $\mathfrak{N}$ be the set of all words in $X^{+}$
which are normal modulo the ideal $(W)$. Assume that $N$ contains the Lyndon words
$l_1
> l_2 > \cdots > l_s$.
Then $\mathfrak{N}$ contains the set
\begin{equation}\label{setT}
T(l_1,\cdots, l_s) = \{w \in X^{\ast}\mid  w = l_1^{k_1} l_2^{k_2}
\cdots l_s^{k_s}, k_i \geq 0\}.
\end{equation}
\end{lemma}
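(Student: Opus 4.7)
The plan is to show that every element $w = l_1^{k_1}l_2^{k_2}\cdots l_s^{k_s}$ of $T(l_1,\ldots,l_s)$ is $W$-normal, by arguing that any hypothetical occurrence of a relator inside $w$ would have to sit inside one of the Lyndon factors $l_i$, contradicting that $l_i \in N$.

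More precisely, I would begin by rewriting $w$ as a product of Lyndon words in nonincreasing order: since $l_1 > l_2 > \cdots > l_s$, the word $w$ is exactly
\[
w \;=\; \underbrace{l_1 \cdots l_1}_{k_1}\,\underbrace{l_2 \cdots l_2}_{k_2}\, \cdots \, \underbrace{l_s \cdots l_s}_{k_s},
\]
so $w$ is a nonincreasing product of (not necessarily distinct) Lyndon words. Next, suppose for contradiction that $w$ is not $W$-normal; then some $u \in W$ occurs as a subword of $w$. Since $W$ consists of Lyndon words, $u \in L$.

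At this point I invoke Lemma \ref{MonLynLemFactorization} directly: a Lyndon subword of a nonincreasing product of Lyndon words must be a subword of one of the factors. Hence $u$ is a subword of $l_i$ for some $i$. But $l_i \in N = \mathfrak{N}(W)\cap L$ is $W$-normal by definition, so $l_i$ contains no element of $W$ as a subword. This contradiction shows $w \in \mathfrak{N}$, and therefore $T(l_1,\ldots,l_s) \subseteq \mathfrak{N}$.

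There is no serious obstacle here; the proof is essentially a one-line application of Lemma \ref{MonLynLemFactorization} together with the definition of $N$. The only point worth being careful about is confirming that the sequence of Lyndon factors really is nonincreasing (which follows trivially from the strict inequalities $l_1 > \cdots > l_s$) and that the case $k_i = 0$ for some indices causes no problem (those factors simply do not appear, leaving the remaining sequence still nonincreasing). In particular, degenerate cases such as $w = 1$ or $w$ equal to a power of a single $l_i$ are handled by the same argument.
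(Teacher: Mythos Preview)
Your argument is correct and is exactly the approach the paper takes: the paper simply states that Lemma~\ref{MonLynLemFactorization} implies this lemma ``straightforwardly,'' and what you have written is precisely the unpacking of that implication.
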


\begin{proposition}
\label{normalbasisprop} Suppose  $A= K\asX/ (W)$ is a monomial algebra,
where $W$ is an antichain of Lyndon words  of arbitrary cardinality.
Let $N= N(W)$ be the corresponding set of $W$-normal Lyndon atoms in
$X^{+}$, and let $\mathfrak{N}$ be the set of normal words modulo $(W)$.
Then   $\mathfrak{N}$
 coincides with the set given in (\ref{normalbasiseq}), and is a $K$-basis of $A$.
\end{proposition}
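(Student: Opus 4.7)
The plan is to establish the set equality
\[
\mathfrak{N} \;=\; \{ l_1^{k_1} \cdots l_s^{k_s} \mid s \geq 1,\; \{l_1 > \cdots > l_s\} \subseteq N,\; k_i \geq 0 \},
\]
after which the fact that $\mathfrak{N}$ is a $K$-basis of $A$ is the standard property of normal monomials for a monomial ideal. The inclusion ``$\supseteq$'' is already supplied by Lemma \ref{normalbasislemma}, so the whole content lies in the reverse inclusion.

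For ``$\subseteq$'', I would start with an arbitrary $w \in \mathfrak{N}$ and appeal to Lyndon's theorem (Fact \ref{fact_lyndonwords} (\ref{fact_lyndonwords6})) to write it uniquely as a non-increasing product of Lyndon words $w = m_1 m_2 \cdots m_t$, with $m_1 \geq m_2 \geq \cdots \geq m_t$. The key observation is that each factor $m_i$ is a subword of $w$, and $w$ is $W$-normal, so no element of $W$ occurs as a subword of any $m_i$; hence $m_i \in \mathfrak{N}(W) \cap L = N$. Grouping together runs of equal consecutive factors yields $w = l_1^{k_1} \cdots l_s^{k_s}$ with $l_1 > l_2 > \cdots > l_s$ strictly decreasing in $N$, which is exactly the required form.

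Once the set equality is in hand, I would note that the presentations $l_1^{k_1}\cdots l_s^{k_s}$ with strictly decreasing $l_i \in N$ are in bijection with elements of $\mathfrak{N}$ (uniqueness is inherited from the uniqueness clause in Lyndon's theorem), so the set in (\ref{normalbasiseq}) is literally equal to $\mathfrak{N}$ as a subset of $X^*$. Since a monomial algebra $A = K\langle X\rangle/(W)$ has $\mathfrak{N}$ as a $K$-basis (the image of $\mathfrak{N}$ in $A$ spans by the rewriting procedure: any word containing some $u \in W$ is killed, and is linearly independent because $(W)$ is a monomial ideal), we conclude that (\ref{normalbasiseq}) is a $K$-basis of $A$.

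The only step that requires any care is the passage from $w \in \mathfrak{N}$ to the assertion that every Lyndon factor $m_i$ of $w$ lies in $N$; this is immediate from the fact that $\mathfrak{N}(W)$ is closed under taking subwords, combined with the definition $N = \mathfrak{N}(W) \cap L$. No use is made of polynomial growth, finiteness of $W$, or of $N$, so the statement holds in full generality as claimed.
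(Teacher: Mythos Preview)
Your proposal is correct and follows essentially the same argument as the paper: both directions are handled identically, with Lemma \ref{normalbasislemma} giving $\supseteq$ and Lyndon's theorem together with the closure of $\mathfrak{N}(W)$ under subwords giving $\subseteq$, after which the $K$-basis claim is the standard fact about normal words modulo a monomial ideal.
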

\begin{proof}
It is well-known in the theory of non-commutative Groebner bases that the set of normal monomials
$\mathfrak{N}$ is a $K$-basis of $A$.
Clearly, the set given in (\ref{normalbasiseq}) is the union
\[T = \bigcup _{\begin{array}{c}
s \geq 1\\ \{l_1 > l_2 > \cdots > l_s\}\subseteq N \end{array}} T(l_1,
\cdots, l_s). \]
We shall show that $\mathfrak{N} = T.$
Let $u \in \mathfrak{N}- \{1\}$. Clearly $u \in X^{+}$,
hence by Lyndon's Theorem (see Fact \ref{fact_lyndonwords})
it can be written uniquely as a
product $u = u_1^{k_1}u_2^ {k_2}\cdots u_s^{k_s}$ of Lyndon words,
where $s \geq 1$, $u_1>u_2>\cdots
>u_s, k_i \geq 1, 1 \leq i \leq s$. As a subword of a normal word,
each $u_i$ in this product is also normal, so $u_1,u_2, \cdots, u_s
\in N$. Therefore $u \in T(u_1,\cdots, u_s)$, see (\ref{setT}). We
have shown the inclusion
$\mathfrak{N} \subseteq T$. The reverse inclusion follows from Lemma
\ref{normalbasislemma}.
\end{proof}

\begin{proof}[Proof of Theorem A]
Part
(\ref{MonLynTheorem11}) follows by Proposition \ref{normalbasisprop}

 (\ref{MonLynTheorem3}).
First we show the implication $|N| < \infty\Longrightarrow
\emph{GK}\dim A =|N|$. Assume that $N$ has finite order $d$, so $N =
\{l_1
> l_2 > \cdots > l_d\} \subset L$, and by part (\ref{MonLynTheorem11})
 the $k$ normal basis of $A$ has the desired
form. The vector spaces isomorphism $A \iso Span_K \mathfrak{N}$
implies that $A$ has polynomial growth of degree $d$ and the Hilbert
series of $A$ is the same as the Hilbert series of a polynomial ring
where the generators have degrees $|l_i|$ for $i = 1, \ldots, d$.
This gives the stated form of the Hilbert series.

Next we show that, conversely, $\emph{GK}\dim A = d < \infty
\Longrightarrow |N|= d$. Suppose that $\emph{GK}\dim A = d$.  If we
assume that for some $s> d$, $N$ contains the set of Lyndon atoms
$\{l_1> l_2 > \cdots > l_s\}$, then by Lemma \ref{normalbasislemma}
the normal $k$-basis $\mathfrak{N}$ of $A$ contains the set $T$
given in (\ref{setT}). This implies that $\emph{GK}\dim A \geq s >
d$, a contradiction. Therefore $N$ is a finite set with $|N| \leq
d.$ It follows from the first implication  that $\emph{GK}\dim A =
|N|.$ This gives the equivalence of (i) and (ii). It is proven in a
more general context (no restriction on the shape of $W$) that for a
finitely presented monomial algebra $A$ conditions (ii), (iii), and
(iv) are equivalent, see \cite{borisenko85}. Part
(\ref{MonLynTheorem3}) has been proved.
\end{proof}

\section{Determining polynomial growth}
\label{Sec_determinimgPolyGrowth}
\subsection{Relations between $W$ and $N(W)$}

We have seen that each antichain $W$ of Lyndon monomials determines
uniquely a set $N=N(W)\subset L$, we refer to  it as \emph{the set of Lyndon
atoms corresponding to} $W$. It satisfies  the following
conditions:
\[\begin{array}{l}
\text{\textbf{C1.}}\quad X \subseteq N.\\
\text{\textbf{C2.}} \quad \forall v \in L , \forall u \in N,   v
\sqsubseteq u \Longrightarrow v
\in N.\\
\text{\textbf{C3.}}\quad u \in N \Longleftrightarrow u \in L \;
\;\text{and}\;\; u \notin (W).
\end{array}
\]

Conversely, each set $N$ of Lyndon words satisfying conditions
\textbf{C1} and \textbf{C2} determines uniquely an antichain of
Lyndon monomials $W= W(N)$, such that condition \textbf{C3} holds,
and $N$ is exactly the set of Lyndon atoms corresponding to $W$.
Indeed, let $C = L - N$ be the complement of $N$ in $L$, and let
$W=W(N)$ be the antichain of all minimal w.r.t. $\sqsubset$ elements
in $C$. Then one has $N = N(W(N))$. We shall
 refer to $W = W(N)$  as \emph{the antichain of
Lyndon words corresponding to $N$}. Proposition \ref{N-WProp0} below
gives some of the close relations between the sets $W$ and $N(W)$ on
set-theoretic level.    Our previous discussion implies
straightforwardly parts (\ref{N-WProp01}), (\ref{N-WProp02}) and the first statement in part (\ref{N-WProp30}).
The remaining parts are extracted from Theorems A and B and are
given only for completeness.
\begin{proposition}
\label{N-WProp0}
\begin{enumerate}
\item
\label{N-WProp01} There exists a one-to-one correspondence between
the set $\mathbb{W}$ of all antichains $W$ of Lyndon words with
$X\bigcap W = \emptyset$ and the set $\mathbb{N}$ consisting of all
sets $N$ of Lyndon words satisfying conditions \textbf{C1} and
\textbf{C2}. In notation as above this correspondence is defined as
\[ \begin{array}{lll}
\phi:& \mathbb{W} \longrightarrow \mathbb{N} \quad  &W \mapsto N(W)\\
\phi^{-1}:&  \mathbb{N} \longrightarrow \mathbb{W}\quad  &N \mapsto W(N).
\end{array}\]
\item\label{N-WProp02}
There are equalities
\[N(W(N)) = N; \quad W(N(W))=W,\]
and each pair $(N= N(W), W)$ (respectively $(N, W=W(N))$  obtained
via this correspondence satisfies condition \textbf{C3}.
\item
\label{N-WProp023}
If  $ N \in \mathbb{N}$ is a finite set of order $d$, then the corresponding antichain $W = W(N)$ is also finite
with $|W| \leq d(d-1)/2$.
\item
\label{N-WProp04} Each finite antichain $W \in \mathbb{W}$
determines a monomial algebra $A = K \asX /(W)$ of finite global
dimension, $gl \dim A \leq |W|+1$  .
\item
\label{N-WProp30} Each $N\in \mathbb{N}$ determines uniquely a
monomial algebra  $A = K \asX /(W)$, with a set of defining relation $W = W(N)$ and a set
of Lyndon atoms precisely $N$.
The algebra $A$ has polynomial growth of degree $d$ \emph{iff} $|N|=d$.
\end{enumerate}
\end{proposition}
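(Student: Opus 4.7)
The plan is to verify each of the five parts either by a direct set-theoretic argument (for parts (1), (2), and the first statement of (5)) or by invoking the previously stated Theorems A and B (for parts (3), (4), and the polynomial-growth equivalence in (5)). No genuinely new combinatorial input is required; the proposition is a consolidated restatement.

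For parts (1) and (2), I would first check that the map $\phi:W \mapsto N(W)$ lands in $\mathbb{N}$. Condition \textbf{C1} holds because $X \cap W = \emptyset$ and each letter is a Lyndon word; condition \textbf{C2} is immediate because $\mathfrak{N}(W)$ is closed under taking subwords, so any Lyndon subword of a word in $N(W)$ also lies in $N(W)$. For the inverse $\phi^{-1}: N \mapsto W(N)$, I would note that the set of $\sqsubseteq$-minimal elements of $L - N$ is automatically an antichain, and it is disjoint from $X$ by \textbf{C1}. To show $N(W(N)) = N$: given $u \in N$, if $u$ contained some $w \in W(N)$ as a subword, then $w \in L - N$ would be a Lyndon subword of $u \in N$, contradicting \textbf{C2}; conversely, if $u \in L$ and $u \notin N$, then the finite set of Lyndon subwords of $u$ lying in $L - N$ has a $\sqsubseteq$-minimal element, which belongs to $W(N)$ and is a subword of $u$, so $u \notin \mathfrak{N}(W(N))$. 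To show $W(N(W)) = W$: elements $w \in W$ lie in $L - N(W)$ and are $\sqsubseteq$-minimal there, because any proper Lyndon subword $v \sqsubset w$ in $L - N(W)$ would itself contain some $w' \in W$, forcing $w' \sqsubseteq w$ with $w' \neq w$ and violating the antichain property of $W$. Condition \textbf{C3} is then tautological, since for a Lyndon word $u$, membership in the monomial ideal $(W)$ is exactly the existence of some $w \in W$ as a subword of $u$.

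For the remaining parts I would simply cite Theorems A and B. For (3): if $|N| = d$ is finite, then by Theorem A(2) the algebra $A = K\langle X \rangle/(W(N))$ has polynomial growth of degree $d$, and then Theorem B(3) gives $|W(N)| \leq d(d-1)/2$. For (4): Theorem B(2)(i) gives exactly $\operatorname{gl\,dim} A \leq |W| + 1$ for finite $W$. For (5): the uniqueness of the algebra attached to $N$ follows from the bijection in (1); the equivalence ``$A$ has polynomial growth of degree $d$ iff $|N| = d$'' is precisely the content of Theorem A(2) together with the dimension formula in Theorem A.

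The only step that is not completely routine is the verification that $W(N)$ really ``covers'' all of $L - N$, i.e.\ that every Lyndon word outside $N$ contains some element of $W(N)$ as a subword. This is the main (and minor) obstacle: although $(X^+, \sqsubseteq)$ does not satisfy the descending chain condition globally, the subword poset of any single word is finite, so a $\sqsubseteq$-minimal element of $L - N$ below any given $u \in L - N$ exists. Once this observation is in place, parts (1) and (2) reduce to bookkeeping and the rest of the proposition is a direct application of the theorems already proved.
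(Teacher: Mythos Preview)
Your proposal is correct and matches the paper's own treatment: the paper does not give a separate proof but states just before the proposition that parts (1), (2), and the first statement of (5) follow from the preceding discussion, while parts (3), (4), and the growth equivalence in (5) are extracted from Theorems A and B, which is exactly your plan. Your explicit verification of the bijection (including the finiteness-of-subwords observation guaranteeing minimal elements) simply fleshes out what the paper leaves implicit; the only small omission is that in showing $W(N(W)) = W$ you argue only $W \subseteq W(N(W))$, but the reverse inclusion is equally immediate (any minimal $u \in L - N(W)$ contains some $w \in W$, and minimality forces $u = w$).
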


We shall need the following Lemma, extracted from a more general result in \cite{TGI0612}.
\begin{lemma}
\label{TGIlemma} \cite{TGI0612}  Let $W$ be an antichain of Lyndon words, and assume
$N= \{l_1 < l_2< \cdots < l_d\}$ has finite order $d$. Then there is an inclusion of sets:
\[\{l_il_{i+1}\mid 1 \leq i < j \leq d\} \subseteq W.\]
In particular, $d-1 \leq |W|.$
\end{lemma}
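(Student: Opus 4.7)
Plan. I would prove, for each $1 \le i \le d-1$, that $l_i l_{i+1}$ satisfies the three defining properties of an element of $W$: it is a Lyndon word, it lies outside $N$, and every proper Lyndon subword of it lies in $N$ (so it is $\sqsubset$-minimal in $L \setminus N$). The first two are quick. Since $l_i < l_{i+1}$ are Lyndon, Fact \ref{fact_lyndonwords}(\ref{fact_lyndonwords1}) gives that $l_i l_{i+1}$ is Lyndon with $l_i < l_i l_{i+1} < l_{i+1}$. If $l_i l_{i+1}$ were an atom it would equal some $l_k$ lying strictly between the consecutive atoms $l_i$ and $l_{i+1}$, which is absurd; hence $l_i l_{i+1} \in L \setminus N \subseteq (W)$, so some element of $W$ sits as a subword of $l_i l_{i+1}$.

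For the minimality, assume for contradiction that some $w \in W$ is a \emph{proper} subword of $l_i l_{i+1}$. Since $l_i, l_{i+1}$ are normal modulo $(W)$, $w$ cannot lie inside either alone; it must straddle the cut, $w = vu$ with $v$ a nonempty right segment of $l_i$ and $u$ a nonempty left segment of $l_{i+1}$. This realizes $\overbrace{l_i, w, l_{i+1}}$, so Lemma \ref{MonLynLemOverlap}(\ref{MonLynLemOverlap3}) gives $l_i < w < l_{i+1}$. Now take the standard factorization $w = pq$ (Fact \ref{fact_lyndonwords}(\ref{fact_lyndonwords4})): $p$ and $q$ are proper Lyndon subwords of $w \in W$, and the $\sqsubset$-minimality of $w$ in $L \setminus N$ forces $p, q \in N$. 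Fact \ref{fact_lyndonwords}(\ref{fact_lyndonwords3}) gives $p < w < q$, and combining with $l_i < w < l_{i+1}$ and the consecutiveness of $l_i, l_{i+1}$ in $N$ yields $p \le l_i$ and $q \ge l_{i+1}$ as atoms.

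The step I expect to demand the most care is ruling out strict inequality; simultaneous equality forces $w = pq = l_i l_{i+1}$, contradicting properness. I would compare the standard cut $(p,q)$ with the straddling cut $(v,u)$, case-splitting on $|p|$ versus $|v|$. If $|p| > |v|$, write $p = v\alpha$ with $\alpha$ a nonempty prefix of $u$ (hence of $l_{i+1}$); then $p$ is itself a straddling Lyndon subword, $\overbrace{l_i, p, l_{i+1}}$, and a second application of Lemma \ref{MonLynLemOverlap}(\ref{MonLynLemOverlap3}) gives $l_i < p < l_{i+1}$, contradicting $p \le l_i$. If $|p| < |v|$, write $v = p\beta$, so $q = \beta u$ is a straddling Lyndon subword and the same lemma gives $l_i < q < l_{i+1}$, contradicting $q \ge l_{i+1}$. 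In the boundary case $|p| = |v|$ we have $p = v$: if $v$ is a proper right segment of $l_i$ then $p > l_i$ by Fact \ref{fact_lyndonwords}(\ref{fact_lyndonwords2}), contradicting $p \le l_i$; otherwise $v = l_i$, and properness of $w$ forces $u$ to be a proper prefix of $l_{i+1}$, so $q = u < l_{i+1}$ contradicts $q \ge l_{i+1}$. All branches collide, so $l_i l_{i+1} \in W$.

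Finally, the $d-1$ products $l_i l_{i+1}$ are pairwise distinct: from $l_i l_{i+1} = l_j l_{j+1}$ with $i < j$ one obtains $l_j = l_i t$ and $l_{i+1} = t l_{j+1}$ for a nonempty $t$, after which the Lyndon properties of $l_j$ and $l_{i+1}$ produce the sandwich $l_j < t < l_{i+1} < l_j$ when $j > i+1$, while when $j = i+1$ the equation $l_i t = t l_{i+2}$ forces $l_{i+2}$ to be a cyclic conjugate of $l_i$, hence $l_{i+2} = l_i$ by uniqueness of Lyndon representatives—both contradictions. This yields the stated bound $|W| \ge d-1$.
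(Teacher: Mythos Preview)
The paper does not prove this lemma; it is quoted from \cite{TGI0612} without argument. Your proposal supplies a correct self-contained proof using only tools already developed in the paper (Fact~\ref{fact_lyndonwords} and Lemma~\ref{MonLynLemOverlap}), so there is nothing in the paper to compare against. The core idea---take the standard factorization $w=pq$ of a hypothetical proper $w\in W$ straddling the cut, observe that $p,q\in N$ are squeezed to $p\le l_i$ and $q\ge l_{i+1}$, then case-split on $|p|$ versus $|v|$ to force a straddling Lyndon atom strictly between $l_i$ and $l_{i+1}$---is clean and works. The justification that $p,q\in N$ is better phrased via the antichain property of $W$ (no proper subword of $w\in W$ can contain an element of $W$); this is exactly Theorem~\ref{N_W_Prop}(\ref{N_W_Prop1}), whose proof is independent of the present lemma.

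One simplification: your final distinctness argument is heavier than needed. From $l_i < l_i l_{i+1} < l_{i+1}$ one gets the chain
\[
l_1 l_2 \;<\; l_2 \;<\; l_2 l_3 \;<\; l_3 \;<\; \cdots \;<\; l_{d-1} \;<\; l_{d-1} l_d,
\]
so the $d-1$ products are pairwise distinct immediately; the conjugacy discussion for $j=i+1$ is unnecessary (and in fact the inequality $l_j < t < l_{i+1}$ you derived already contradicts $l_j \ge l_{i+1}$ for every $j\ge i+1$, including $j=i+1$).
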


The following theorem gives some of the intimate relations between
 $W$ and $N(W)$ on the level of words.

\begin{theorem}
\label{N_W_Prop} Let $W$ be an antichain of Lyndon words, let $N=
N(W)$ be the corresponding set of Lyndon atoms.
\begin{enumerate}
\item
\label{N_W_Prop1} If $u$ is a proper Lyndon subword of some $w \in
W$, then $u$ is a Lyndon atom, so $u \in N$.
\item
\label{N_W_Prop21} Every word  $w\in W$ factors as $uv$, where $u <
v \in N$.
\item
\label{N_W_Prop3}
If $N= \{l_1 < l_2 <\cdots < l_d\}$ has finite order $d$, then $W$ is also finite
with $d-1 \leq |W| \leq d(d-1)/2$, and there are inclusions of sets:
\begin{equation}
\label{equationN_W_Prop3}
\{l_il_{i+1}\mid 1 \leq i <  d\} \subseteq W \subseteq \{l_il_{j}\mid 1 \leq i < j \leq d\}.
\end{equation}
Moreover, if $s$ is the
maximal length of words in $N$, then  each $w\in W$ has length $|w|\leq 2s$.
\item
\label{N_W_Prop4} Assume  $W$ is finite and let $m$ be the maximal
length of words in $W$. Let $N^{(m-1)}$ be the set of all Lyndon
atoms $u$ of length $\leq m-1$.  The following conditions are
equivalent:
\begin{enumerate}
\item
\label{N_W_Prop4a}
 $N$ is finite;
\item
\label{N_W_Prop4b} every word $l \in N$ has length $|l|\leq m-1$,
that is $N = N^{(m-1)}$.
\item
\label{N_W_Prop4c} Every word  $ab$, where $a, b \in N^{(m-1)}$ with
$a<b$ and $|ab| \geq m$ contains as a subword some $w \in W$.
\end{enumerate}
\end{enumerate}
\end{theorem}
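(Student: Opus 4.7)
The plan is to address parts (1)--(4) in order, since part (1) supplies the key mechanism used throughout. For part (1), I would argue that if $u$ is a proper Lyndon subword of $w\in W$ and some $v\in W$ were contained in $u$, then $v$ would be a proper subword of $w$ (since $|v|\leq|u|<|w|$), contradicting that $W$ is an antichain under $\sqsubset$; hence $u$ is normal modulo $(W)$, and since $u\in L$, we conclude $u\in N$. Part (2) is then immediate: each $w\in W$ satisfies $|w|\geq 2$ (because $W\cap X=\emptyset$), so by Fact \ref{fact_lyndonwords}(4) it has a standard factorization $w=uv$ with $u,v\in L$ and $u<v$, and by part (1) both factors lie in $N$.

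Part (3) follows quickly from what precedes. The factorization $w=uv$ with $u<v$ in $N$ gives an injection $w\mapsto(u,v)$ into $\{(l_i,l_j)\mid 1\leq i<j\leq d\}$, yielding $|W|\leq d(d-1)/2$ together with the length bound $|w|=|u|+|v|\leq 2s$. The lower bound $d-1\leq|W|$ and the explicit inclusion $\{l_il_{i+1}\mid 1\leq i<d\}\subseteq W$ are exactly Lemma \ref{TGIlemma}.

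For part (4) I would arrange the equivalences as (b)$\Rightarrow$(a)$\Rightarrow$(b) together with (b)$\Leftrightarrow$(c). The implication (b)$\Rightarrow$(a) is trivial, since the set of Lyndon words in $X$ of length at most $m-1$ is finite. For (a)$\Rightarrow$(b), enumerate $N=\{l_1<l_2<\cdots<l_d\}$; Lemma \ref{TGIlemma} places each $l_il_{i+1}$ in $W$, so $|l_i|+|l_{i+1}|\leq m$, and since every length is at least one, this forces $|l_i|\leq m-1$ for every $i$ (for $l_d$, use the pair $(l_{d-1},l_d)$). For (b)$\Rightarrow$(c), given $a,b\in N^{(m-1)}$ with $a<b$, the product $ab$ is Lyndon by Fact \ref{fact_lyndonwords}(3); if it contained no $w\in W$ it would be a normal Lyndon word of length $\geq m$, contradicting $N=N^{(m-1)}$. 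For (c)$\Rightarrow$(b), I suppose for contradiction that some atom has length $\geq m$ and choose one, $l$, of minimal such length; its standard factorization $l=ab$ has $a,b\in N$ by part (1) and $a<b$, while minimality forces $|a|,|b|\leq m-1$, so $a,b\in N^{(m-1)}$ and $|ab|=|l|\geq m$; then (c) gives some $w\in W$ as a subword of $l$, contradicting $l\in N$.

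The main technical inputs are the Lyndon-word facts assembled in Section \ref{thenormabasis} (especially Fact \ref{fact_lyndonwords}(3),(4) for producing Lyndon products and reading off standard factorizations) and Lemma \ref{TGIlemma}, which supplies the crucial consecutive pairs $l_il_{i+1}\in W$. The subtlest step is (a)$\Rightarrow$(b): without the consecutive-pair inclusion in $W$, relating the maximum length of atoms to the maximum length in $W$ would require a more delicate combinatorial analysis, but with Lemma \ref{TGIlemma} in hand it collapses to a one-line arithmetic inequality.
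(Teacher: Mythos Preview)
Your proof is correct and follows essentially the same route as the paper's: the antichain argument for (1), standard factorization plus (1) for (2), Lemma~\ref{TGIlemma} together with (2) for the two inclusions and bounds in (3), and the same cycle of implications in (4), including the minimal-counterexample argument for (c)$\Rightarrow$(b). One small slip: in your (c)$\Rightarrow$(b) step you write ``$a,b\in N$ by part (1),'' but part (1) concerns proper Lyndon subwords of elements of $W$, whereas here $l\in N$; the correct (and simpler) justification is that $l$ is normal, hence so are its subwords $a,b$, which being Lyndon therefore lie in $N$.
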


\begin{proof}
Part (\ref{N_W_Prop1}) follows straightforwardly from the definition
of an antichain.

(\ref{N_W_Prop21}). Let $w\in W$. As a Lyndon word $w$ has a
standard factorization $w = uv,$ where $u,v \in L$ and $v$ is the
longest proper right Lyndon segment of $w$. By (\ref{N_W_Prop1})
$u$ and $v$ are Lyndon atoms.

(\ref{N_W_Prop3}). Assume now that $N = \{l_1 < l_2 < \cdots < l_d \}$.
Lemma \ref{TGIlemma} implies the left-hand side inclusion in (\ref{equationN_W_Prop3}) and
the inequality
$d-1 \leq |W|$.
Part (\ref{N_W_Prop21})
implies that $W \subseteq \{l_il_j \mid 1 \leq i < j \leq d\}$,
hence $|W| \leq d(d-1)/2$. This also implies that the length of each
$w\in W$ is at most $2s$, where $s$ is the maximal length of a
Lyndon atom.

(\ref{N_W_Prop4}). Suppose $m$ is the maximal length of words in
$W$.

The implications (\ref{N_W_Prop4b}) $\Longrightarrow$
(\ref{N_W_Prop4a}) and (\ref{N_W_Prop4b}) $\Longrightarrow$
(\ref{N_W_Prop4c}) are clear.

(\ref{N_W_Prop4c}) $\Longrightarrow$ (\ref{N_W_Prop4b}). Suppose
every monomial $u= ab,$ where $a, b \in N^{(m-1)}$, $a<b$, and $m
\leq |a|+|b| \leq 2(m-1),$ contains as a subword some $w \in W$. We
claim that every Lyndon word $l$ of length $|l|\geq m$ is in the ideal $(W)$.
Assume the contrary. Let $l \in L$ be of minimal length, such that
$|l|\geq m$, and $l \notin (W)$.  Clearly, $l \in N$ so the Lyndon
words $a,b$ in its standard factorization $l = ab, a<b$ are also
Lyndon atoms. The lengths of $a$ and $b$ satisfy either i) $|a|\leq
m-1$ and $|b|\leq m-1$; or ii) at least one of the monomials $a$ and
$b$ has length $\geq m$. Note that  (i) is impossible, since it contradicts condition
(\ref{N_W_Prop4c}). Suppose (ii) holds. Without loss of generality, we may assume
$|a|\geq m$. We have found a Lyndon word $a \in N$, such that $m
\leq |a| <|l|$ which contradicts the choice of $l$.

(\ref{N_W_Prop4a}) $\Longrightarrow$ (\ref{N_W_Prop4b}) Suppose $N$
is a finite set of order $d$, and let $N =\{l_1 < l_2 < \cdots <
l_d\}$. It is proven in \cite{TGI0612},  that $l_il_{i+1}\in W$ for
all $1 \leq i \leq d$. Therefore $|l_il_{i+1}| \leq m$, which gives
\[|l_i|\leq m-1,\quad 1\leq i \leq d.\]
\end{proof}

\begin{remark} As we have seen, when $N$ is finite of order $d$ the lower and the upper bounds
for the order $|W|$ are exact.
Theorem B implies that in this case the equality $|W| = d(d-1)/2$ determines the set $W$
uniquely and explicitly. In contrast, when $|W| = d-1$, there may be various $W$'s  reaching  this bound.
One example  is the antichain $W$  defining the Fibonacci algebra $F_d$ of Section \ref{FibonacciAlgebra}.
Another example is defined via its set of Lyndon atoms:
\[
N = \{ x < x^{d-2}y < x^{d-3}y < \cdots < xy < y\}
\]
\end{remark}

\subsection{An algorithm to determine polynomial growth}
In the general case of an s.f.p. monomial algebra $A = K \asX /(W)$,
where $W\subset X^{+}$ is a finite antichain of monomials,  one can
use Ufnarovski's graph $\Gamma(A)$ to decide whether the algebra has
polynomial or exponential growth. For convenience of the reader we recall the definition and an important
result.

The Ufnarovski graph $\Gamma =\Gamma(A)$ of normal words is
 \emph{a directed graph} defined as follows. The
vertices of $\Gamma(A)$ are the non-zero words $u$ of $A$ of length
$m-1$, (that is $u \in \mathfrak{N}(W)$), where $m$ is the maximal
length of a word in $W$. There is an arrow $u \longrightarrow v$
\emph{iff} $ux = yv \in \mathfrak{N}(W)$ for some $x,y \in X$.
 A  cyclic route is called \emph{a cycle},
this is a  path beginning and ending at a vertex $u$.
\begin{fact} \cite{Ufnarovski}
 \label{GNfact}
 \begin{enumerate}
 \item For every $k \geq m$ there is a one-to-one correspondence between the
 set
 of normal words of length
$k$ and the set of paths of length $k-m+1$ in the graph $\Gamma$.
The path $y_1\cdots y_{m-1}\longrightarrow y_{2}\cdots y_m  \cdots
\longrightarrow y_{k-m+1}\cdots y_k $ (these are not necessarily
distinct vertices) corresponds to the word $y_1y_2\cdots y_k \in
\mathfrak{N}$ ($y_1, \cdots, y_k\in X$).
\item
$A$ has exponential growth \emph{iff} the graph $\Gamma$ has two
intersecting cycles.
\item
$A$ has polynomial growth
 of degree $d$ \emph{iff}
 $\Gamma$ has no intersecting cyclic routes (cycles) and
  $d$ is the largest number of (oriented) cycles occurring in a
  path of $\Gamma$.
 \end{enumerate}
 \end{fact}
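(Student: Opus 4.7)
The plan is to establish the three parts of the fact in turn, with part (1) providing the combinatorial framework that makes parts (2) and (3) into elementary counting arguments in $\Gamma$.

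For part (1), I would exhibit the bijection explicitly. Given a normal word $w = y_1 y_2 \cdots y_k$ with $k \geq m$, set $u_j = y_j y_{j+1} \cdots y_{j+m-2}$ for $j = 1, \ldots, k-m+2$; each $u_j$ has length $m-1$ and is a subword of $w$, hence is $W$-normal and therefore a vertex of $\Gamma$. The identity $u_j \, y_{j+m-1} = y_j \, u_{j+1}$ realises the arrow $u_j \to u_{j+1}$, since $u_j y_{j+m-1}$ is a length-$m$ subword of the normal word $w$ and is therefore normal. Conversely, given a path $u_1 \to u_2 \to \cdots \to u_{k-m+2}$, the $(m-1)$-letter overlaps between consecutive vertices determine a unique word $w = y_1 \cdots y_k$ whose successive length-$(m-1)$ windows are the $u_j$; normality of $w$ reduces to normality of each length-$m$ window $y_j \cdots y_{j+m-1}$, since every forbidden word has length at most $m$, and this normality is built into the arrow condition. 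Both assignments are inverse to each other on the nose.

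For part (2), the forward direction is direct: if two distinct cycles of lengths $p, q$ share a vertex $u$, then at each return to $u$ one chooses freely between the two cycles, producing $2^n$ distinct paths of length $n(p+q)$ based at $u$; prepending any fixed path from some vertex to $u$, the bijection of part (1) converts these into $2^n$ distinct normal words of prescribed length, so $\dim_K A_n$ grows exponentially. The converse, which is the technical heart of the fact, assumes that no two cycles of $\Gamma$ share a vertex and claims polynomial growth. I would collapse each elementary cycle to a single supervertex to form a DAG; every path of length $n$ in $\Gamma$ then corresponds uniquely to a choice of (i) a directed walk $C_1 C_2 \cdots C_s$ in this DAG, together with (ii) nonnegative integers recording how many times each $C_i$ is traversed. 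Since the DAG is finite, there are only finitely many walks $C_1 \cdots C_s$, so the counting reduces to a finite sum of generating functions of the form $\prod_{i=1}^s 1/(1-t^{|C_i|})$, each of which is the Hilbert series of a polynomial ring in $s$ variables.

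For part (3), summing the generating functions above over all walks in the DAG shows that the Hilbert series is a finite sum of the form $\sum_\alpha \prod_{i=1}^{s(\alpha)} 1/(1-t^{|C^\alpha_i|})$, and the order of the pole at $t=1$ of such a sum is $\max_\alpha s(\alpha)$, which is precisely the maximal number of cycles occurring on a single path in $\Gamma$. This maximal pole order is the Gelfand--Kirillov dimension, so it equals the degree $d$ of polynomial growth.

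The main obstacle will be the converse in part (2), namely organising the path count when there are no intersecting cycles; everything else is essentially a matter of unwinding definitions. The cleanest route is via the generating function argument sketched above, which simultaneously handles (3), so I would merge the proofs of the converses in (2) and (3) into one generating-function computation and treat the existence of exponential growth in the forward direction of (2) as a separate, direct construction.
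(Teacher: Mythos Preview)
The paper does not prove this statement: it is recorded as a \emph{Fact} with a citation to Ufnarovski and is used as a black box, so there is no proof in the paper to compare your proposal against.

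That said, your plan is essentially the standard argument one finds in Ufnarovski's work and is sound. Part (1) is exactly right. In the forward direction of part (2) your counting is slightly loose: the $2^n$ walks obtained by freely choosing between the two cycles at each return to $u$ do not all have the same length $n(p+q)$ unless $p=q$, and one also needs to know that distinct choice-sequences give distinct walks. The clean fix is to observe that two intersecting cycles force some vertex in the strongly connected component to have out-degree at least two with both outgoing edges lying on cycles back to it; from that vertex the free binary choice genuinely produces $2^n$ distinct walks of a common length. For the converse of (2) and for (3), your condensation-to-a-DAG and generating-function computation is the right idea; just be careful that the supervertices include the non-cycle vertices as well, and that the finitely many polynomial factors coming from entry/exit arcs on each cycle and from the edges between supervertices do not affect the pole order at $t=1$.
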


\begin{remark}
Given $X$ and $W$, formally one can decide effectively whether $A = K \asX /(W)$ has
polynomial growth of degree $d$.

Note that this method does not give a sharp
upper bound  for the length of normal monomials
(or equivalently routes in
$\Gamma$) that have to be checked in order to find the growth.
Clearly, the length of a cyclic route in
$\Gamma$ is bounded by the number of its vertices (that is by the
number of normal words of length $m-1$). Translated to words, this method involves
the words in $\mathfrak{N}$ of length $\leq m-1 + |\mathfrak{N}_{m-1}|$, where $\mathfrak{N}_{m-1}$
is the set of all normal words of length $m-1$.
In contrast with the general case, when $W$ is an antichain of Lyndon
words instead of working with general normal words one works only with Lyndon atoms.
Furthermore, there exists a sharp upper bound for the admissible length of normal Lyndon atoms
in order to have polynomial growth. This bound is  $m$, it is \emph{common for all monomial
algebras with sets of defining relations} $W\subset L$ such that $m=
\max\{|w|\mid w \in W\}.$ Here we have to study whether or not there exists an atom $u$ of length
$m \leq u \leq 2m-2$.
More precisely, knowing all atoms of length $\leq m-1$, whose number is say $d$,
 one has only to check all possible products $ab$, where $a < b$, are atoms of length $\leq m-1$,
 and $|ab|\geq m.$ The number of such products is bounded by $d(d-1)/2.$
\end{remark}
\medskip
Condition (\ref{N_W_Prop4c}) of Theorem \ref{N_W_Prop} implies a
simple method to decide whether $A$ has polynomial growth. Consider
the following problem.
\begin{problem}
Given
\[ \begin{array}{ll}
X = \{x_1, \cdots, x_g\}  &\text{ a finite alphabet}\\
W = \{w_1, \cdots w_r\} \subset L &\text{a finite antichain of
Lyndon monomials}\\
m: =  \max_{1 \leq i \leq r} |w_i|&\\
k: = \min_{1 \leq i \leq r} |w_i|.&
\end{array}
\]
\begin{enumerate}
\item
For each $s = 1, \ldots, m-1$, find the set $N_s$ of Lyndon atoms of
length $s$.

Find  $N^{(m-1)}:= \bigcup_{1 \leq s \leq m-1} N_s$.

\item Decide whether the monomial algebra $A = K \asX /(W)$ has
polynomial growth, and if "yes",

\item Find $GK\dim A$, the Gelfand-Kirillov dimension of $A$,
and $gl\dim A$, the global dimension of $A$.
\end{enumerate}
\end{problem}

In the settings of this problem the question of finding the global dimension of $A$  (only in case of polynomial growth)
is answered straightforwardly. In the general case of finitely presented monomial
algebras, or s.f.p. associative algebras, there exist various algorithms, which (implementing Anick`s results)
find the global dimension directly using the reduced Groebner basis, and Anick's resolution,
see  \cite{TGI89}, \cite{Ufnarovski97}, et all.

\begin{method}

\begin{enumerate}

\item
For $1 \leq i \leq k-1$, we set $N_i =\{u \in L \mid
|u| = i\}$.

For $k \leq s \leq m-1$ we find $N_s$  recursively.

Suppose $N_j$ is found for all $1 \leq j \leq s-1$, denote
$N^{(s-1)} = \bigcup_{1 \leq i \leq s-1} N_i$. Then
\begin{align*}
N_s= \{ab \mid & a, b \in N^{(s-1)}, a < b,  |a|+|b|= s, \; \\
&\quad \text{no $w\in W$ is a subword of} \;ab\}\\
N^{(m-1)} = &\bigcup_{1 \leq i \leq m-1} N_i\\
d: = &\;  |N^{(m-1)}|.
\end{align*}

\item
For each pair  $a, b \in N^{(m-1)}$ with $a<b$ and
$|ab| \geq m$ check whether $ab$ has some $w \in W$ as a subword.

If "YES", then $A$ has polynomial growth, proceed to 3.

If, ``NO" (i. e. there exist $a, b \in N^{(m-1)}$ with $a<b$ and
$|ab| \geq m$, such that no $w\in W$ is a segment of $ab$), then $A$
has exponential growth. The process halts.

\item
Set \[GK \dim := d;  \quad gl \dim A := d.\]

\end{enumerate}
\end{method}

\begin{remark}
Note that
in contrast with Anick's proof, see \cite{Anick85}, the special
shape of the elements of $W$ makes it possible to straightforwardly
determine  the so called ``atoms" of $A$, which are difficult to
find explicitly using Anick's result.  In our case these are the
Lyndon atoms.
To find the atoms in the general case  of a finitely presented monomial algebra with polynomial growth,
one can use the graph $\Gamma$:
the atoms correspond to the cycles in $\Gamma$.
\end{remark}

\section{The global dimension}
\label{GlobalDimension}

Given an antichain of monomials $W \subset X^{+}$ the monomial
algebra $A= K\asX/ (W)$ is a particular case of  finitely generated
augmented graded algebras with a set of obstructions $W$, see
\cite{Anick85} and \cite{Anick86}. Anick constructs a resolution, of
the field $K$ considered as an $A$-module, and obtains important
results on algebras with polynomial growth and finite global
dimension, see \cite{Anick86}, and \cite{Anick85}. The "bricks" of
Anick's resolution are the so called \emph{ $n$-chains on $W$}.
Anick's resolution is minimal whenever $A$ is a monomial algebra. We
recall first the  definition of an $n$-chain and a result from
\cite[Sec. 3]{Anick85}.

\begin{definition}
\label{nchaindef}  The set of  $n$-chains on $W$ is
defined recursively. A
 $(-1)$-\emph{chain} is the monomial $1$, a
$0$-\emph{chain} is any element of $X$, an $1$-\emph{chain} is a
word in $W$. An $(n+1)$-\emph{prechain} is a word $w \in X^{+}$,
which can be factored in two different ways $w = uvq=ust$ such that
$t \in W$, $u$ is an $(n-1)$-chain, $uv$ is an $n$-chain, and $s$ is
a proper left segment of $v$. An $(n+1)$-\emph{prechain} is an
$(n+1)$-chain if no proper left segment of it is an $n$-chain. In
this case the monomial $q$ is called \emph{the tail of the
$(n+1)$-chain $w$}.
\end{definition}

The following fact can be extracted from \cite[Theorem 4]{Anick85}.
\begin{fact}
\label{fact_gldim}
 Let  $A = K \asX / (W)$ be a
monomial algebra, where  $X$ is a nonempty set of arbitrary
cardinality, and $W $ is an antichain of monomials in $X^{+}$.
 The global dimension of $A$ is $n$ \emph{iff}
there exists an $(n-1)$-chain  but there are no $n$-chains on $W$.
\end{fact}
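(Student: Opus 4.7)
The plan is to construct Anick's free resolution of the trivial module $K = A/A_{+}$ and read the global dimension off of it. Since $A$ is a connected graded augmented algebra, $\mathrm{gl\,dim}\, A$ coincides with the projective dimension $\mathrm{pd}_A K$ of the trivial left $A$-module. Hence it suffices to exhibit a minimal free $A$-resolution of $K$ whose free module in homological degree $n$ has a basis indexed by the $(n-1)$-chains on $W$; the statement then follows by reading off the last nonzero term.

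First I would set, for each $n \geq -1$, the free module $F_n = A \otimes_K V_n$, where $V_n$ is the $K$-span of the set of $n$-chains of Definition \ref{nchaindef}. The differential $\partial_n : F_n \to F_{n-1}$ would be defined using the defining factorization $w = uvq$ of an $n$-chain: to leading order $\partial_n [w] = q\cdot [uv]$, and any correction terms are forced by the recursion in the definition. Verifying $\partial^2 = 0$ reduces to a combinatorial identity enumerating the finer refinements of the factorization of $w$; this is precisely where the clause ``no proper left segment of $w$ is an $n$-chain'' in Definition \ref{nchaindef} gets used to avoid double counting.

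Exactness of $\cdots \to F_1 \to F_0 \to A \to K \to 0$ is proved by constructing a contracting $K$-linear homotopy. The combinatorial heart of this step is the statement that every nonempty normal monomial $w \in \mathfrak{N}(W)$ has a unique maximal decomposition $w = w'q$ in which $q$ is the tail of a chain factorization and $w'$ is itself normal; iterating this decomposition sorts the normal $K$-basis of $A$ into a tree whose nodes correspond bijectively to pairs (chain, normal multiplier). The homotopy peels off one such tail at a time, and its construction forces exactness.

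Finally, for a monomial ideal $(W)$ the image of $\partial_n$ automatically lies in $A_{+}\cdot F_{n-1}$ for every $n \geq 1$: each summand is a single monomial of positive degree times a basis chain, and no cancellation between distinct summands can occur in a monomial algebra. Thus the resolution is minimal, so $\mathrm{pd}_A K$ equals the largest $n$ with $V_n \neq 0$, namely the largest $n$ such that an $n$-chain on $W$ exists. This yields the equivalence $\mathrm{gl\,dim}\, A = n$ iff an $(n-1)$-chain exists but no $n$-chain does. The main obstacle is the exactness step: the contracting homotopy and the identity $\partial^2 = 0$ must be set up simultaneously, via a delicate induction on chain length that is the genuine combinatorial content of Anick's construction.
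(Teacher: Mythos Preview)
The paper does not supply its own proof of this statement: it is recorded as a \emph{Fact} extracted from \cite[Theorem~4]{Anick85}, with no argument given. Your outline is precisely Anick's own proof (build his resolution on the $n$-chains, prove exactness via a contracting homotopy, observe minimality in the monomial case), so there is nothing in the paper to compare it against.

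Two minor remarks on the sketch itself. First, for a monomial ideal the differential on an $n$-chain $w = uvq$ really is $\partial_n[w] = q\cdot[uv]$ with \emph{no} correction terms: the extra summands in Anick's general construction arise from reducing non-normal monomials modulo the Gr\"obner basis, and for a monomial ideal any such reduction is zero. The identity $\partial^2 = 0$ is then simply the observation that the product of two consecutive tails always contains an element of $W$ as a subword, so it vanishes in $A$; this also makes your minimality claim immediate. Second, the equality $\mathrm{gl\,dim}\,A = \mathrm{pd}_A K$ deserves a word of justification, since $X$ is allowed to be infinite: it holds because $A$ is connected graded and every graded simple is a shift of $K$, so the supremum defining global dimension is attained at $K$.
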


One can read off  Anick's definition that every $n$-chain is "built"
out of a string of $n$- monomials from $W$ which overlap
successively in a special way. The lemma below is straightforward.

\begin{lemma}
\label{nchainlemma} Let $W$ be a nonempty antichain of monomials in
$X^{+}$, $n \geq 2$. The word $\omega \in X^{+}$ is an $n$-prechain
if and only if it has a presentation
\begin{equation}
\label{nchaineq} \omega =  v_0 t_1 v_1 t_2 v_2 \cdots t_{n-2}
v_{n-2} t_{n-1} v_{n-1}t_nv_n,
\end{equation}
such  that
\begin{enumerate}
\item
$v_0 \in X$, and $v_i \in X^{+}, t_i \in X^{\ast}, 1 \leq i \leq n$;
\item
\label{nchainlemma2} Each   $u_i= v_i t_i v_{i+1}$  is a word in
$W$, $0 \leq i \leq n-1$, and one has $\overbrace{u_i, u_{i+1}}$ for
all $0 \leq i \leq n-2$.
\item
For  $1 \leq m \leq n-1,$  the left proper segment  \[w_m = v_0 t_1
v_1 t_2 v_2 \cdots v_{m-1} t_mv_m\] of $\omega$ is an $m$-chain on
$W$ with a tail $t_mv_m$ . Furthermore,
\[v_0 t_1 v_1 t_2 v_2 \cdots t_{n-2} v_{n-2} t_{n-1} v_{n-1}\] is
the unique $(n-1)$-chain contained as a left segment of $\omega$.
(The initial letter, $w_0 = v_0\in X$ is a  $0$-chain).
\end{enumerate}
An $n$-prechain is an $n$-chain if no proper left segment of it is
an $n$-prechain.
\end{lemma}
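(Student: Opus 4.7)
The plan is to prove both directions by induction on $n\geq 2$, essentially unfolding Anick's recursive definition of an $n$-prechain and recording the bookkeeping.

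In the base case $n=2$, a $2$-prechain $\omega$ is, by definition, a word with two factorizations $\omega = uvq = ust$, where $u\in X$ is a $0$-chain, $uv\in W$ is a $1$-chain, $t\in W$, and $s$ is a proper left segment of $v$. I would set $v_0 := u$, write $v = sr$ with $r\in X^+$ (so $t = rq$), and declare $t_1:=s$, $v_1:=r$, $t_2:=1$, $v_2:=q$. The one nontrivial point is that $q\neq 1$: otherwise $t = r$ would be a right segment of $u_0 = v_0 sr \in W$, contradicting the antichain property of $W$. Conditions (1)--(3) are then verified by direct inspection.

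For the inductive step, suppose the characterization holds for $n$ and let $\omega$ be an $(n+1)$-prechain, so $\omega = uvq = ust$ with $u$ an $(n-1)$-chain, $uv$ an $n$-chain, $t\in W$, and $s$ a proper left segment of $v$. I would apply the inductive hypothesis to $uv$, choosing a presentation $uv = v_0 t_1 v_1 \cdots t_n v_n$ whose $(n-1)$-chain left segment $w_{n-1}$ equals the given $u$; this forces $v = t_n v_n$. Writing $v = sr$ with $r\in X^+$ gives $t = rq$, and the new $W$-word $u_n := t$ overlaps with $u_{n-1} = v_{n-1} t_n v_n$ exactly along the right segment $r$ of $v$. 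I would then re-slice the last block as $u_{n-1} = v_{n-1}\,\widetilde{t}_n\,\widetilde{v}_n$ with $\widetilde{v}_n = r$; the antichain property of $W$ forces $|r|<|u_{n-1}|$, so the rewrite is legitimate and does not encroach on $v_{n-1}$. Finally, the antichain property again forces $q\in X^+$, so $q = t_{n+1}v_{n+1}$ with $v_{n+1}\in X^+$ (taking $t_{n+1}:=1$ suffices), and the resulting expansion $\omega = v_0 t_1 v_1 \cdots v_{n-1}\widetilde{t}_n \widetilde{v}_n t_{n+1} v_{n+1}$ satisfies (1)--(3), with $u_{n-1}$ and $u_n$ overlapping along $\widetilde{v}_n=r$ as required.

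The reverse direction is a direct verification: given a presentation of $\omega$ satisfying (1)--(3), set $u := w_{n-2}$, $v := t_{n-1}v_{n-1}$, $q := t_n v_n$, $t := u_{n-1}\in W$ and $s := t_{n-1}$; then $\omega = uvq = ust$ by inspection, and $s = t_{n-1}$ is a proper left segment of $v = t_{n-1}v_{n-1}$ because $v_{n-1}\in X^+$. The concluding sentence of the lemma, characterizing $n$-chains as minimal $n$-prechains, is then a restatement of the definition. I expect the main obstacle to be the re-slicing step in the inductive case: since $s$ may end anywhere inside $v = t_n v_n$, the new overlap $r$ generally does not coincide with $v_n$, so one must simultaneously (i) choose a presentation of $uv$ whose top $(n-1)$-chain segment is the prescribed $u$ and (ii) absorb the discrepancy between $r$ and $v_n$ into a new $\widetilde{t}_n$ without disturbing the earlier $m$-chain left segments $w_m$, $m\leq n-2$. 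All the length inequalities that make this work ultimately reduce to the antichain property of $W$, which forbids both $q=1$ and $r=u_{n-1}$.
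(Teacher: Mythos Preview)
Your approach is sound. Note, however, that the paper does not actually give a proof of this lemma: it is introduced with the remark ``The lemma below is straightforward'' and treated as a direct unwinding of Anick's recursive Definition~\ref{nchaindef}. So there is no argument in the paper to compare against; your induction simply fills in the bookkeeping that the authors leave to the reader.

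Two small comments on those details. First, the inequality $|r|<|u_{n-1}|$ does not require the antichain hypothesis: since $r$ is a right segment of $v$ and $u_{n-1}=v_{n-1}v$ with $v_{n-1}\in X^{+}$, one has $|r|\le |v|<|u_{n-1}|$ automatically. The antichain property is genuinely needed only to exclude $q=1$ (otherwise $t=r$ would be a proper subword of $u_{n-1}\in W$). Relatedly, the ``re-slicing'' is milder than you suggest: only the product $t_n v_n=v$ is constrained by the inductive presentation of $uv$, so replacing the split $t_n\cdot v_n$ by $s\cdot r$ leaves every $w_m$ for $m\le n-1$ untouched. Second, you do not explicitly verify the uniqueness clause in condition~(3), namely that $w_{n-1}=uv$ is the \emph{only} $(n-1)$-chain occurring as a left segment of $\omega$. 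This follows immediately from the minimality built into the definition of a chain (two $(n-1)$-chain left segments of $\omega$ are comparable, and the shorter would be a proper $(n-1)$-prechain left segment of the longer), but it deserves a sentence.
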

\begin{proposition}
\label{nchainprop} Let $W$ be a nonempty antichain of Lyndon words
(of arbitrary cardinality). Then
\begin{enumerate}
\item
\label{nchainprop1}
 Every $2$-prechain is a Lyndon word.
\item
\label{nchainprop2} Every $n$-chain $\omega$, $n \geq 1$, is a
Lyndon word.
\item
\label{nchainprop3} Suppose  $\omega$ is an $n$-chain, $n \geq 1$.
Let $u_i = v_{i-1}t_iv_i \in W$ ,   $1 \leq i \leq n$ be the Lyndon
monomials involved in its presentation (\ref{nchaineq}). Then
$\overbrace{u_i, u_{i+1}}$, for all $1 \leq i \leq n-1$, so there
are  strict inequalities
\[ u_1 < u_2 < \cdots < u_n.\]
\item
\label{nchainprop4} If $W$ is a set of order $r$, then there are no
$(r+1)$-chains on $W$.
\end{enumerate}
\end{proposition}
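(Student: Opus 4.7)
The plan is to combine the structural description of $n$-chains from Lemma~\ref{nchainlemma} with the overlap-to-Lyndon results collected in Lemma~\ref{MonLynLemOverlap}, proceeding by a short induction on $n$.

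For parts (\ref{nchainprop1}) and (\ref{nchainprop2}), the workhorse is Lemma~\ref{MonLynLemOverlap}(\ref{MonLynLemOverlap1}), which says that if $uv, vw \in L$ then $uvw \in L$. I would first prove (\ref{nchainprop1}) directly: a $2$-prechain $\omega = v_0 t_1 v_1 t_2 v_2$ has $u_1 = v_0 t_1 v_1 \in W$ and $u_2 = v_1 t_2 v_2 \in W$ by Lemma~\ref{nchainlemma}, both Lyndon and sharing the nonempty segment $v_1$. Writing $u_1 = (v_0 t_1) v_1$ and $u_2 = v_1 (t_2 v_2)$, with both wings nonempty since $v_0 \in X$ and $v_2 \in X^+$, the cited lemma yields $\omega \in L$. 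For (\ref{nchainprop2}) I would induct on $n$. The base $n = 1$ is immediate, since $1$-chains lie in $W \subset L$. For the inductive step, Lemma~\ref{nchainlemma} lets me factor an $n$-chain as $\omega = (A v_{n-1}) \cdot (t_n v_n)$, where $A v_{n-1} = w_{n-1}$ is the unique $(n-1)$-chain appearing as a left segment of $\omega$ (Lyndon by the inductive hypothesis) and $u_n = v_{n-1} t_n v_n \in W$ is Lyndon. Since $v_0 \in X$ is contained in $A$ and $v_n \in X^+$, both wings lie in $X^+$, and Lemma~\ref{MonLynLemOverlap}(\ref{MonLynLemOverlap1}) applies once more to give $\omega \in L$.

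For part (\ref{nchainprop3}), the overlap relations on successive pairs $u_i, u_{i+1}$ are already encoded in Lemma~\ref{nchainlemma}(\ref{nchainlemma2}). Since $W$ is an antichain for the subword order, no $u_i$ can be a segment of $u_{i+1}$ or vice versa, so each overlap is genuine (in the decomposition $u_i = uv$, $u_{i+1} = vw$, all three of $u$, $v$, $w$ are nonempty); Lemma~\ref{MonLynLemOverlap}(\ref{MonLynLemOverlap2}) then yields the strict inequalities $u_1 < u_2 < \cdots < u_n$ directly. Part (\ref{nchainprop4}) is an immediate corollary: an $(r+1)$-chain would produce pairwise distinct elements $u_1 < \cdots < u_{r+1}$ of $W$, contradicting $|W| = r$.

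The only delicate point I foresee is the bookkeeping check that the wing segments in the applications of Lemma~\ref{MonLynLemOverlap}(\ref{MonLynLemOverlap1}) are nonempty, which is required for the quoted form of the lemma to apply cleanly. This reduces to the observation that $v_0 \in X$ sits at the far left and $v_n \in X^+$ at the far right of the canonical factorization from Lemma~\ref{nchainlemma}, so every intermediate cut into ``prefix times overlap times suffix'' has nontrivial prefix and suffix. Once this is in hand, the whole proposition reduces to iterated application of the overlap-to-Lyndon lemma along the canonical decomposition of an $n$-chain.
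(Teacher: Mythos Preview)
Your proposal is correct and follows essentially the same route as the paper: both arguments use the structural decomposition of an $n$-chain from Lemma~\ref{nchainlemma} together with Lemma~\ref{MonLynLemOverlap}(\ref{MonLynLemOverlap1}) for the inductive Lyndon claim in parts (\ref{nchainprop1})--(\ref{nchainprop2}), and Lemma~\ref{MonLynLemOverlap}(\ref{MonLynLemOverlap2}) for the strict chain of inequalities in part (\ref{nchainprop3}). One small remark: your appeal to the antichain property of $W$ to force the overlaps $\overbrace{u_i,u_{i+1}}$ to be ``genuine'' is harmless but unnecessary, since Lemma~\ref{MonLynLemOverlap}(\ref{MonLynLemOverlap2}) already yields $a<b$ under the weaker hypothesis $\overbrace{a,b}$ (which allows one wing to be empty), and in any case the decomposition $u_i = (v_{i-1}t_i)v_i$, $u_{i+1} = v_i(t_{i+1}v_{i+1})$ with all $v_j \in X^{+}$ shows directly that both wings are nonempty.
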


\begin{proof}
\ref{nchainprop1}.  Let $\omega$ be a $2$-prechain, then it factors
as $\omega= uvw$, where $u,v,w\in X^{+}$, and $uv, vw \in W
\subseteq L$. Hence $uv$ and $vu$ are Lyndon words, and by Lemma
\ref{MonLynLemOverlap} the product $uvw = \omega$ is also a Lyndon
word.

(\ref{nchainprop2}) We prove that every $n$-chain $\omega$ is a
Lyndon word by induction on $n$. By definition each $1$-chain is an
element of $W$, and therefore it is a Lyndon word. We just proved
that the 2-chains are also Lyndon words. Suppose now that for $2
\leq m \leq n$ every $m$-chain is a Lyndon word, and let $\omega$ be
an $(n+1)$-chain. Then in notation as in Lemma \ref{nchainlemma},
$\omega = (w_{n-1}t_{n}). v_{n} (t_{n+1}v_{n+1})=uvw$,
where $w_{n-1}$ is an $(n-1)$-chain, $u= w_{n-1}t_{n}$,  $v=v_{n}$,  $uv = w_{n-1}t_{n}v_{n}$
is the unique $n$-chain contained as a left segment of $\omega$, and
$vw = v_{n}t_{n+1}v_{n+1} \in W.$ By the inductive assumption the
$n$-chain $uv$ is a Lyndon word, clearly $vw\in W$ is also a Lyndon
word. It follows then from Lemma \ref{MonLynLemOverlap} that $\omega
= uvw$ is a Lyndon word, which proves part (\ref{nchainprop2}) of
the proposition.

(\ref{nchainprop3}) Suppose  $\omega$ is an $n$-chain.  Then each of
the monomials $u_m = v_{m-1}t_mv_m$,    $1 \leq m \leq n$, involved
in its presentation (\ref{nchaineq}) is an element of $W$, so it is a
Lyndon word. Clearly, for $1 \leq i \leq n-1$ one has
$\overbrace{u_i, u_{i+1}}$, so by Lemma \ref{MonLynLemOverlap}
\ref{MonLynLemOverlap2} $u_i<u_{i+1}$. This yields $u_1 < u_2 <
\cdots < u_n$, which proves (\ref{nchainprop3}).

Part (\ref{nchainprop4}) follows straightforwardly from
(\ref{nchainprop3}).
\end{proof}

\begin{proof}[Proof of Theorem B]
(\ref{MonLynTheorem1}). It is well-known that any antichain of
monomials $W$ is a minimal Gr\"{o}bner basis of the ideal $(W)$,
thus $A = K \asX /(W)$ is a standard finite presentation of $A$
\emph{iff} $W$ is finite.

 (\ref{MonLynTheorem2}).
Suppose $|W| = r$. By Proposition \ref{nchainprop} part
(\ref{nchainprop4}) there are no $(r+1)$-chains on $W$, so  Fact
\ref{fact_gldim}  implies that $\emph{gl} \dim A \leq r+1$ which
proves part (i). Part (ii) follows straightforwardly from the
results of Ufnarovski, \cite{Ufnarovski82}.  For convenience of the
reader, only, we shall give a sketch of a proof.
We shall use the results of Ufnarovski recalled in Section \ref{Sec_determinimgPolyGrowth}.

It follows from Fact \ref{GNfact}  that an s.f.p. algebra  $A$ has either polynomial or exponential
growth. Assume that $A$ has exponential growth, so by Fact
\ref{GNfact} the graph $\Gamma$ has two intersecting cycles $C_1$
and $C_2$.  Let $a_1, a_2$, respectively, be the corresponding
normal words. Then every word $u$ in the alphabet $a_1, a_2$
corresponds to a route in $\Gamma$ and therefore $u \in
\mathfrak{N}$. This implies that $A$ contains the free algebra
generated by $a_1$ and $a_2$.

 Part (iii) follows from Theorem \ref{N_W_Prop} (\ref{N_W_Prop4}).

(\ref{MonLynTheorem4}).
Assume  $\emph{GK}\dim A = d$, then, as we
have already shown, $|N| = d$   and therefore, by Theorem
\ref{N_W_Prop} (\ref{N_W_Prop3}), one has $|W| \leq d(d-1)/2$.

A resent result of \cite{TGI0612} verifies (independently of Anick's
results) that if $W$ is an antichain of Lyndon words,  and $|N|= d$,
then  there exists a  $(d-1)$-chain, but there is no $d$-chain on
$W$, and therefore by  Fact \ref{fact_gldim}, $A$ has global
dimension $d$. By Theorem A the order  $|N|=d$, is also  the
GK-dimension of $A$.

We give now  a second (indirect) argument for the equality between the global dimension and GK-dimension of $A$.

By assumption $A$ has polynomial growth,  so $W$ is a finite set, and by
part (\ref{MonLynTheorem2})  $A$ has finite global dimension.
Clearly $A$ does not contain a free subalgebra generated by two monomials,
and therefore by   \cite[Theorem 6]{Anick85} there is an equality  $GK\dim A = gl\dim A$.

Suppose $N = \{l_1 < l_2 < \cdots < l_d\}$.
By Theorem \ref{N_W_Prop}
\[W \subseteq \{ l_il_j \mid 1 \leq i < j\leq d \},\]
so the equality $|W| = d(d-1)/2$ implies an equality of the sets above.

By convention
$X\subseteq N$, and therefore
\[
W_0 = \{x_ix_j |1 \leq i < j \leq g\} \subseteq W.\]
 Note that $W_0$, consists of all Lyndon words of
length $2$. Corollary \ref{nicecorollary} implies that every Lyndon
word $a$ of length $\geq 2$ contains a subword of the form $x_ix_j$,
$1 \leq i < j\leq g$, hence $a \in (W_0)$. Note that $W$ is an
antichain of Lyndon monomials of length $\geq 2$, it follows then
that  $W= W_0$. This implies the equalities $N= X$, $g=d$, so $A$ is
presented as
\begin{equation}
\label{eqA} A = k \langle x_1, \cdots, x_d  \rangle /(W_0),\quad
W_0= \{x_ix_j \mid 1 \leq i < j\leq d\}.\end{equation}

Conversely, if the monomial algebra $A$ is defined by \ref{eqA},
where $g = d,$, then $A$ satisfies the hypothesis of the theorem, the
set of its Lyndon atoms is $N = X,$ and $|W| = d(d-1)/2$.

\end{proof}

Corollary \ref{corThB} is straightforward from Theorem B (3), and Anick's result
\cite[Theorem 6]{Anick85}.

\begin{remark}
The fact that every standard finitely presented graded algebra $A$
has either polynomial or exponential growth is already classical. It
follows
from Ufnarovski's results, \cite{Ufnarovski82}, see also Fact
\ref{GNfact}. As we have seen for finitely presented monomial
algebras exponential growth is equivalent to the existence of a free
subalgebra generated by two monomials, a condition which is, in
general, stronger than having exponential growth.
\end{remark}

\begin{corollary}
\label{monomialalgebrathCOR} Let $A^0  = K\langle x_1
\cdots, x_n  \rangle /(W_0)$  be a monomial algebra, where $W_0
\subseteq X^{+}$ is an arbitrary antichain of monomials. The
following conditions are equivalent:
\begin{enumerate}
\item
\label{theor11cor} The set of monomial relations $W_0$ consists of
Lyndon words,
$A^0$ has polynomial growth of degree $d$, and $|W_0| = d(d-1)/2$.
\item
\label{theor12cor} $A^0$ is a quadratic algebra, i.e.  $W_0$
consists of monomials of length $2$, $n=d$,  and at least one of  conditions
 (1) through (6)  of Theorem \ref{monomialalgebrath} is
satisfied.
\end{enumerate}
In this case all conditions (1), $\cdots$, (6) of Theorem
\ref{monomialalgebrath} hold.
 \end{corollary}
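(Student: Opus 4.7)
The plan is to establish each of the two implications by reducing it directly to the results already proved in the paper; the statement is essentially a bookkeeping consolidation of Theorems A, B and \ref{monomialalgebrath}.

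For the implication (\ref{theor11cor}) $\Longrightarrow$ (\ref{theor12cor}): the hypothesis of (\ref{theor11cor}) is exactly the hypothesis of Theorem B, so I would first invoke Theorem A to see that polynomial growth of degree $d$ gives $|N|=d$. Next I would apply Theorem B (\ref{MonLynTheorem4}): the assumption $|W_0|=d(d-1)/2$ is one of the three equivalent conditions there, so the other two also hold, namely $W_0=\{x_ix_j\mid 1\leq i<j\leq d\}$ and $N=X$. Combining $X\subseteq N$ with $N=X$ and $|N|=d$ forces $n=|X|=d$. In particular $W_0$ consists only of monomials of length two, so $A^0$ is quadratic. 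Finally, relabeling $y_i:=x_{n+1-i}$ rewrites $W_0$ in the form $\{y_jy_i\mid 1\leq i<j\leq n\}$, which is precisely condition (6) of Theorem \ref{monomialalgebrath}; hence by the equivalence proved there, all of (1)--(6) hold.

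For (\ref{theor12cor}) $\Longrightarrow$ (\ref{theor11cor}): because (\ref{theor12cor}) assumes that at least one of the conditions (1)--(6) of Theorem \ref{monomialalgebrath} holds, all of them hold. Applying condition (6), fix a permutation $y_1,\ldots,y_n$ of $x_1,\ldots,x_n$ with $W_0=\{y_jy_i\mid 1\leq i<j\leq n\}$. Then I would choose the reversed linear ordering $y_n<y_{n-1}<\cdots<y_1$ on $X$: under this ordering each monomial $y_jy_i$ with $i<j$ is a product $ab$ with $a<b$ and $a\neq b$, hence a Lyndon word of length two, so $W_0$ is an antichain of Lyndon words. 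Condition (1) of Theorem \ref{monomialalgebrath} yields polynomial growth, condition (4) pins down the Hilbert series to $1/(1-z)^n$ and therefore the degree of growth to $n=d$, and either condition (3) or direct inspection of (6) gives $|W_0|=\binom{n}{2}=d(d-1)/2$.

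The one point that needs care, and what I would call the main (mild) obstacle, is the orientation of the ordering. In Theorem \ref{monomialalgebrath} condition (6) the relations are written $y_jy_i$ with $i<j$, which are Lyndon words not under the natural enumeration ordering $y_1<\cdots<y_n$ but under its reverse. Any proof of the corollary must therefore explicitly reverse the ordering when going from Theorem \ref{monomialalgebrath} to a Lyndon presentation, and, in the other direction, must extract the equality $n=d$ by combining the conclusions $N=X$ and $|N|=d$ coming from Theorems B and A respectively. Once these are handled, the corollary follows immediately, and the final sentence ``all conditions (1)--(6) of Theorem \ref{monomialalgebrath} hold'' is obtained for free from the equivalence internal to Theorem \ref{monomialalgebrath}.
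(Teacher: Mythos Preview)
Your proposal is correct and follows precisely the route the paper intends: the corollary is stated without proof immediately after Theorem B, as a direct consequence of Theorems A, B(\ref{MonLynTheorem4}) and \ref{monomialalgebrath}, and your argument spells out exactly that deduction. The only care needed---the reversal of the ordering when matching the Lyndon relations $\{x_ix_j\mid i<j\}$ of Theorem B with the form $\{y_jy_i\mid i<j\}$ in Theorem \ref{monomialalgebrath}(6), and the extraction of $n=d$ from $N=X$---is handled correctly.
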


\section{An extremal algebra: The Fibonacci algebra}
\label{FibonacciAlgebra}
\subsection{Fibonacci-Lyndon words}
Consider the alphabet $X = \{x,y\}$.
Define the sequence of {\it Fibonacci-Lyndon words} $\{\fib_n(x,y)\}$
by the initial conditions $\fib_0 = x, \fib_1 = y$ and, then for $n \geq 1$
\begin{equation} \label{FibLigDef}
 \fib_{2n} = \fib_{2n-2}\fib_{2n-1}, \quad \fib_{2n+1} = \fib_{2n} \fib_{2n-1}.
\end{equation}
This give the sequence

\vskip 3mm
\begin{tabular}{c|c|c|c|c|c|c}
 0 & 1 & 2 & 3 & 4 & 5 & 6 \\
\hline
 $x$ & $y$ & $xy$ & $xyy$ & $xyxyy$ & $xyxyyxyy$ & $xyxyyxyxyyxyy$. \\
\end{tabular}

\vskip 3mm \noindent Note that if we let $a$ be $\fib_2(x,y) = xy$
and $b$ be $ \fib_3(x,y) = xyy$, then the Fibonacci-Lyndon word
$\fib_m(a,b) = \fib_{m+2}(x,y)$.

\begin{lemma} The following holds:
\begin{itemize}
\item[a.] The word $\fib_n(x,y)$ is a Lyndon word and its length
is the $n$'th Fibonacci number.
\item[b.] For the lexicographic order we have
\[ \fib_0 < \fib_2 < \cdots < \fib_{2n} < \cdots < \fib_{2n+1} < \cdots < \fib_3
< \fib_1. \]
\end{itemize}
\end{lemma}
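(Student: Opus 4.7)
The plan is to prove both parts by a single induction on $n$, leveraging Fact~\ref{fact_lyndonwords}(\ref{fact_lyndonwords1}), which states that whenever $a < b$ are Lyndon words, $ab$ is again Lyndon and, in fact, $a < ab < b$. This one statement supplies simultaneously the Lyndon property of each new word produced by the recursion and its correct position in the interleaved chain of inequalities claimed in (b).

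The length claim in (a) is immediate from the recursion (\ref{FibLigDef}): $|\fib_0| = |\fib_1| = 1$, and $|\fib_{2n}| = |\fib_{2n-2}| + |\fib_{2n-1}|$, $|\fib_{2n+1}| = |\fib_{2n}| + |\fib_{2n-1}|$ combine into the single relation $|\fib_n| = |\fib_{n-1}| + |\fib_{n-2}|$ for $n \geq 2$, so $|\fib_n|$ is the $n$-th Fibonacci number.

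For the remaining assertions I would carry the joint induction hypothesis $H_n$: each of $\fib_0, \fib_1, \ldots, \fib_{2n+1}$ is a Lyndon word, and they satisfy
\[ \fib_0 < \fib_2 < \cdots < \fib_{2n} < \fib_{2n+1} < \cdots < \fib_3 < \fib_1. \]
The base $H_0$ is just $x < y$, and each generator is trivially Lyndon. For the step from $H_n$ to $H_{n+1}$, apply Fact~\ref{fact_lyndonwords}(\ref{fact_lyndonwords1}) twice. First, $H_n$ gives $\fib_{2n} < \fib_{2n+1}$ with both Lyndon, so $\fib_{2n+2} = \fib_{2n}\fib_{2n+1}$ is Lyndon and satisfies $\fib_{2n} < \fib_{2n+2} < \fib_{2n+1}$, placing it between the even and odd halves of the chain. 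Second, since we now have $\fib_{2n+2} < \fib_{2n+1}$ with both Lyndon, the word $\fib_{2n+3} = \fib_{2n+2}\fib_{2n+1}$ is Lyndon with $\fib_{2n+2} < \fib_{2n+3} < \fib_{2n+1}$. Combining these inequalities with $H_n$ yields $H_{n+1}$.

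No real obstacle is expected. The only thing to watch is that the two concatenations prescribed by the recursion, namely $\fib_{2n+2} = \fib_{2n}\fib_{2n+1}$ and $\fib_{2n+3} = \fib_{2n+2}\fib_{2n+1}$, have their factors in strictly increasing lexicographic order; this is precisely what the induction hypothesis (together with the first half of the inductive step) supplies, so Fact~\ref{fact_lyndonwords}(\ref{fact_lyndonwords1}) applies directly without any additional lemma.
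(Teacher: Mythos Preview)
Your proof is correct and follows essentially the same inductive approach as the paper. The only cosmetic difference is that you invoke Fact~\ref{fact_lyndonwords}(\ref{fact_lyndonwords1}) to obtain the Lyndon property and the inequalities $a<ab<b$ in one stroke, whereas the paper first establishes the Lyndon property by induction and then appeals to Fact~\ref{fact_lyndonwords}(\ref{fact_lyndonwords3}) for the inequalities; the underlying argument is the same.
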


\begin{proof}
By induction we see that $\fib_{2n}$ and $\fib_{2n+1}$ are Lyndon
words and their lengths are as stated, which gives part a.

Now the recursive definition (\ref{FibLigDef}) and Fact
\ref{fact_lyndonwords} (\ref{fact_lyndonwords3}), imply that for each
$n \geq 1$ the Fibonacci-Lyndon words satisfy
\begin{equation}
\label{fib_eq1}
\begin{array}{c}
\fib_{2n-2} < \fib_{2n} = \fib_{2n-2}\fib_{2n-1}< \fib_{2n-1}\\
\fib_{2n} < \fib_{2n+1} = \fib_{2n} \fib_{2n-1} < \fib_{2n-1}.
\end{array}
\end{equation}
This straightforwardly proves part (b).
\end{proof}

Let $U$ consist of all Lyndon words $\fib_{2n-2}\fib_{2n}$ and
 $\fib_{2n+1}\fib_{2n-1}$, where $n \geq 1$.

\begin{proposition} A Lyndon word $w$ in $x$ and $y$ is not in the ideal
$(U)$ if and only if it is a Fibonacci-Lyndon word.
\end{proposition}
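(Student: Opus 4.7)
The plan is to prove both implications by strong induction on $|w|$, using the uniqueness of the standard Lyndon factorization (Fact~\ref{fact_lyndonwords}(\ref{fact_lyndonwords4})) as the organizing principle. Write $M=\{\fib_n:n\ge0\}$.

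A first preliminary establishes $U\cap M=\emptyset$. By Fact~\ref{fact_lyndonwords}(\ref{fact_lyndonwords5}) applied with $(a,b)=(\fib_{2m-4},\fib_{2m-3})$, the standard factorization of $\fib_{2m-2}$, and $c=\fib_{2m}$ (the hypotheses $\fib_{2m-2}<\fib_{2m}\le\fib_{2m-3}$ follow because the even-indexed Fibonacci-Lyndon words form an increasing chain all lying below every odd-indexed one), $\fib_{2m-2}\fib_{2m}$ is Lyndon with standard factorization $(\fib_{2m-2},\fib_{2m})$. Symmetrically $\fib_{2m+1}\fib_{2m-1}$ is Lyndon with standard factorization $(\fib_{2m+1},\fib_{2m-1})$. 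In both cases the two indices share parity and differ by~$2$, whereas every $\fib_n$ with $n\ge2$ has standard factorization $(\fib_{n-2},\fib_{n-1})$ or $(\fib_{n-1},\fib_{n-2})$---indices of opposite parity differing by~$1$. By uniqueness of standard factorization, no element of $U$ lies in $M$. A second preliminary, obtained by iterating the recursion~(\ref{FibLigDef}), is that $\fib_n$ begins with $\fib_k$ for every even $k\le n$ and ends with $\fib_k$ for every odd $k\le n$.

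For the direction $w\notin(U)\Rightarrow w\in M$, I induct on $|w|$. The base $|w|=1$ is immediate since $w\in\{\fib_0,\fib_1\}$. For $|w|\ge 2$, let $w=(a,b)$ be the standard factorization. Since $a,b$ are Lyndon subwords of $w$ and $w\notin(U)$, the induction hypothesis gives $a=\fib_i$, $b=\fib_j$ with $\fib_i<\fib_j$; if $(i,j)\in\{(2m-2,2m-1),(2m,2m-1)\}$, then $w\in M$. Otherwise a short case analysis using the starts-with/ends-with fact locates an element of $U$ as a factor of $w$: when $i,j$ are both even with $j\ge i+2$, or $i$ even and $j$ odd with $j\ge i+3$ (using $\fib_j=\fib_{j-1}\fib_{j-2}$ in the latter), $\fib_j$ begins with $\fib_{i+2}$, so $\fib_i\fib_{i+2}\in U$ is a prefix of $w$; when both $i,j$ are odd with $i\ge j+2$, or $i$ even and $j$ odd with $i\ge j+3$ (using $\fib_i=\fib_{i-2}\fib_{i-1}$), $\fib_i$ ends with $\fib_{j+2}$, so $\fib_{j+2}\fib_j\in U$ is a suffix. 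This contradicts $w\notin(U)$ in every non-Fibonacci case, forcing $w\in M$.

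For the direction $w\in M\Rightarrow w\notin(U)$, equivalent to saying that no $u\in U$ is a factor of any $\fib_n$, I induct on $n$ with $w=\fib_n$; cases $n\le 2$ are trivial since $|u|\ge 3$ for all $u\in U$. For $n\ge 3$, $\fib_n=\fib_i\fib_j$ is the standard factorization, and by the induction hypothesis no $u\in U$ is a factor of $\fib_i$ or $\fib_j$; hence any hypothetical occurrence of $u$ in $\fib_n$ must straddle the $\fib_i\mid\fib_j$ junction. Ruling out this straddling is the main obstacle. Since $\fib_{2m-2}\fib_{2m}=\fib_{2m-2}^{\,2}\fib_{2m-1}$ and $\fib_{2m+1}\fib_{2m-1}=\fib_{2m}\fib_{2m-1}^{\,2}$, a straddling occurrence forces a doubled Fibonacci-Lyndon block to appear across the junction. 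I plan to exclude this by tracking, through iterated use of~(\ref{FibLigDef}), the precise suffix of $\fib_i$ and prefix of $\fib_j$ at the boundary, showing that the putative doubled block would induce a second, different standard factorization of $\fib_n$ and so contradict the uniqueness granted by Fact~\ref{fact_lyndonwords}(\ref{fact_lyndonwords4}). Combined with the inductive absence of $u$ inside $\fib_i$ and $\fib_j$, this closes the argument.
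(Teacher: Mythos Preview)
Your forward direction ($w\notin(U)\Rightarrow w\in M$) is correct and is carried out by a genuinely different method than the paper's. You do an explicit case analysis on the indices $i,j$ in the standard factorization $w=\fib_i\fib_j$, using the prefix/suffix structure of the $\fib_n$. The paper instead exploits self-similarity: since $\fib_0\fib_2=xxy$ and $\fib_3\fib_1=xyyy$ lie in $U$, any Lyndon $w\notin(U)$ is forced to be a word in the new letters $a=xy$ and $b=xyy$; and because $\fib_m(a,b)=\fib_{m+2}(x,y)$, the whole problem in the alphabet $\{x,y\}$ is isomorphic to the same problem in $\{a,b\}$ with strictly smaller length, so a single induction step finishes. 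Your approach is more hands-on; the paper's buys both directions at once with no case analysis.

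Your backward direction, however, is not a proof but a plan, and the mechanism you propose is not sound as stated. Saying that a straddling occurrence of $\fib_k^2$ ``would induce a second, different standard factorization of $\fib_n$'' cannot work: the standard factorization is unique by definition, so there is nothing to contradict---an internal occurrence of a square does not produce a rival right Lyndon factor of $\fib_n$. You would need a concrete combinatorial obstruction at the junction (e.g.\ an explicit description of the suffix of $\fib_i$ and prefix of $\fib_j$ ruling out $\fib_{2m-2}^{\,2}$ and $\fib_{2m-1}^{\,2}$ across the seam, for all $m$), and that is exactly the work you have not done. The paper's substitution argument sidesteps this entirely: once $w=\fib_n$ with $n\ge2$ is rewritten as $\fib_{n-2}(a,b)$, avoidance of $xxy$ and $xyyy$ is automatic from the block structure, and avoidance of the higher $U$-words reduces, via $\fib_m(a,b)=\fib_{m+2}(x,y)$, to the inductive hypothesis in the shorter alphabet. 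If you want to keep your approach, you must replace the ``second standard factorization'' idea with an actual argument.
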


\begin{proof} We argue by induction
on the length of $w$, the statement clearly holds when the length
is one. Suppose the length of $w$ is $\geq 2$. Note that $\fib_0
\fib_2 = xxy$ and $\fib_3 \fib_1 = xyyy$, so, if a Lyndon word $w$
is not in the ideal $(U)$ we see that $w$ must be a word in $a = xy$
and $b = xyy$. Since $\fib_m(a,b) = \fib_{m+2}(x,y)$ we see that $w$
is not divisible by any of
\[ \fib_{2p-2}(a,b) \fib_{2p}(a,b), \quad \fib_{2p+1}(a,b) \fib_{2p-1}(a,b). \]
Considering the length of $w$ written in terms of $a$ and $b$ and
using induction, we prove that $w = \fib_m(a,b) = \fib_{m+2}(x,y)$ for some $m$.
\end{proof}

\subsection{The extremal algebra}
Let $W_n$ be the antichain of all minimal elements in $U \cup \{ \fib_n(x,y) \}$,
with respect to the divisibility order $\sqsubset$.
This is a finite set, since $\fib_n$ is
a factor of the Fibonacci-Lyndon words later in the sequence.

The
set of  Lyndon atoms with respect to the ideal $(W_n)$ is $N_n = \{
\fib_0, \ldots, \fib_{n-1} \}$, and so we obtain a monomial algebra,
 {\it the  Fibonacci algebra}
\[ F_n = k\asX /(W_n),  \]
whose Hilbert series is
\[ \prod_{i =0}^{n-1} \frac{1}{1-t^{|f_i|}},  \]
where $f_i$ is the $i$'th Fibonacci number.
Clearly,  the global
dimension and the Gelfand-Kirillov dimension of $A$ are both $n$.

This algebra is extremal in the following sense.
\begin{proposition} Let $W$ be a finite set of Lyndon words such that
the corresponding set of Lyndon atoms,
$N(W) = \{ w_0, \ldots, w_{n-1} \}$, is
finite and enumerated according to increasing lengths of $w_p$. Then the
lengths satisfy $|w_p| \leq |\fib_p|$. If we have an equality
for each $p$, $0 \leq p \leq n-1,$
then the algebra $A = K\asX / (W)$ is isomorphic to the Fibonacci
algebra $F_n$.
\end{proposition}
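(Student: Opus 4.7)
I would prove both claims in tandem by strong induction on $p$, making essential use of the fact that every Lyndon atom of length at least two admits a standard factorization whose factors are themselves Lyndon atoms.

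For the inequality $|w_p| \leq |f_p|$, the base cases $p = 0, 1$ are immediate from $X \subseteq N$: if $n \geq 2$ then necessarily $|X| \geq 2$ (else $N = X$ has only one element), so $|w_0| = |w_1| = 1 = |f_0| = |f_1|$. For the inductive step $p \geq 2$, the Lyndon atom $w_p$ has length at least two and admits a standard factorization $w_p = uv$ with $u, v$ Lyndon and $u < v$ in lex. Both $u$ and $v$ are proper subwords of the normal word $w_p$, hence normal; being Lyndon as well, they lie in $N = L \cap \mathfrak{N}$. They are distinct and have length strictly less than $|w_p|$, so they correspond to two distinct indices $i \neq j$ in the length-enumeration of $N$, both at most $p - 1$. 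Taking $i < j$ yields $j \leq p - 1$ and $i \leq p - 2$, so the inductive hypothesis combined with monotonicity of the Fibonacci sequence gives
\[
|w_p| = |w_i| + |w_j| \;\leq\; |f_i| + |f_j| \;\leq\; |f_{p-2}| + |f_{p-1}| \;=\; |f_p|.
\]

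For the equality case, suppose $|w_p| = |f_p|$ for every $p$. Since $|w_2| = 2 > 1$ and atoms are enumerated by increasing length, $N$ has exactly two length-one elements, so $|X| = 2$; fix a labeling $X = \{x, y\}$ with $x < y$ in lex. Then $w_0, w_1 \in \{x, y\}$, $w_2 = xy = f_2$ is the unique Lyndon word of length $2$, and I would prove by induction that $w_p = f_p$ as a word for every $p$. Granting $w_i = f_i$ for $i < p$, the standard factorization $w_p = uv$ satisfies $u, v \in \{f_0, \ldots, f_{p-1}\}$, $u < v$, and $|u| + |v| = |f_p|$. For $p \geq 4$ the decomposition $|f_p| = |f_i| + |f_j|$ with $i \neq j$, $i, j \leq p - 1$ is unique: if both indices were at most $p - 2$ then $|u| + |v| \leq 2|f_{p-2}| < |f_p|$, so $\{u, v\} = \{f_{p-2}, f_{p-1}\}$. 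The Fibonacci lex order recalled just above the proposition (even-indexed below odd-indexed), together with the constraint $u < v$, selects the correct order and reproduces $w_p = f_p$ via the Fibonacci recursion.

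The delicate case is $p = 3$, where $|f_3| = 1 + 2$ admits both $(x, xy)$ and $(xy, y)$ as lex-compatible factorizations, yielding $w_3 = xxy$ or $w_3 = xyy = f_3$. I would handle it by observing that the choice $w_3 = xxy$ is interchanged with $w_3 = xyy$ by the anti-automorphism of $K\langle X \rangle$ that swaps $x \leftrightarrow y$ and reverses words, so up to this symmetry one may assume $w_3 = f_3$; this is the main obstacle of the proof, as the Fibonacci decomposition is genuinely non-unique only at $p = 3$. Once $w_p = f_p$ is established for every $p$, the atom set $N$ is precisely $\{f_0, \dots, f_{n-1}\}$; by Proposition~\ref{N-WProp0} the antichain $W = W(N)$ is then uniquely determined. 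Since every $w \in W$ factors as a product $l_i l_j$ of atoms with $l_i < l_j$ by Theorem~\ref{N_W_Prop}(\ref{N_W_Prop21}), a direct inspection of the products $f_i f_j$ identifies $W$ with the antichain $W_n$ from the construction of $F_n$, and therefore $A = K\langle X \rangle / (W) = F_n$.
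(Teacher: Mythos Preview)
Your proof is correct and follows essentially the same route as the paper's: induction on $p$ using the standard factorization to bound lengths, then in the equality case deducing $|X|=2$, pinning down $w_2=xy$, resolving the genuine ambiguity at $p=3$ via the anti-automorphism $\iota\circ\tau$ (swap letters and reverse words), and proceeding inductively thereafter. Your argument is in fact slightly more complete than the paper's in one respect: for $p\ge 4$ you actually justify why the factorization must use exactly $f_{p-2}$ and $f_{p-1}$ (via $2|f_{p-2}|<|f_p|$), whereas the paper simply asserts that the standard factorization of $w_{2r}$ is one of $w_{2r-2}w_{2r-1}$ or $w_{2r-1}w_{2r-2}$ without isolating this step. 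Your final appeal to the bijection $W\leftrightarrow N(W)$ of Proposition~\ref{N-WProp0} to conclude $W=W_n$ is also a clean way to finish that the paper leaves implicit.
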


\begin{proof}
By convention, $X$ has at least two elements, and  $X \sus N$.
Clearly, then $|w_i| \leq |\fib_i|$ for $i = 0,1$.
Let $n-1 \leq p \geq 2,$
and let $w_p = lm$ be the standard factorization  of $w_p$.
Then $l,m$ must be in $N$ so we may write $w_p = w_iw_j$
where $i$ and $j$ are distinct integers $ < p$.
If $i < j< p$,
then $i \leq p-2, j \leq p-1$ so by the
inductive assumption, $|w_i| \leq |\fib_{p-2}|$ and $|w_j|  \leq
|\fib_{p-1}|$, and therefore $|w_p| = |w_i| + |w_j|\leq |\fib_{p}|$. The case $i >
j$ is analogous.

Assume now $|w_p| = |\fib_{p}|$, $0 \leq p \leq n-1$. Clearly,
$X$ must consist of two elements $x < y$, and
there are equalities $w_0 = x$ and $w_1 = y$, or the other way around. In any
case we must have $w_2 = xy$. There are now two possibilities  for
$w_3$. It is either $xyy$ or $xxy$. Assume $w_3 = xyy$. We then prove
by induction that $w_p = \fib_p$ for $p = 0, \ldots, n-1$.
Clearly, the standard factorization of $w_{2r}$ is either $w_{2r-1}
w_{2r-2}$ or $w_{2r-2} w_{2r-1}$. Since $\fib_p = w_p$ for $p < 2r$
and given the ordering of the Fibonacci-Lyndon words, the latter
must hold, and so $w_{2r} = \fib_{2r}$. Similarly, we may argue that
$w_{2r+1} = \fib_{2r+1}$.

In the case $w_3 = xxy$ we take $w_0 = y$ and $w_1 = x$. There is an
involution $\tau$ on $k\langle x, y \rangle$ which takes a word
$a_1a_2 \cdots a_r$ and arranges it in the opposite order $a_r
\cdots a_1$. There is also an involution $\iota$ which replaces each
$x$ with $y$ and each $y$ with $x$. It is not difficult to argue
that $w_p = \iota \circ \tau (\fib_p)$, and so $A$ and $F_n$ become
isomorphic via the map $\iota \circ \tau$.
\end{proof}

\section{Fibonacci algebras not deforming
to Artin-Schelter regular algebras}

\label{FibASSec}

It is known that to each Lyndon word $l$ one may associate a Lie monomial (called bracketing of $l$, and denoted by
  $[l]$),
\cite{Lothaire}, Chapter 5, or \cite{Reutenauer},  Chapter 4.
The Lie monomials corresponding to Lyndon words
form a basis for the free Lie algebra, $Lie(X)$, generated by $X$.

To each monomial algebra $A = K\asX/(W)$ defined by an antichain of Lyndon words $W$ we associate canonically
the (associative) algebra $\tilde{A} = K\asX/([W])$ with the same generating set $X$,
and the set $[W]$ of Lie monomials associated with $W$ as defining relations.
(As usual,  a Lie element $[a,b]\in Lie(X)$ is
considered also as an "associative" element $[a,b] = ab-ba \in K\asX$). In this case
the algebra $\tilde{A}$ is an enveloping algebra of the Lie algebra $\mathcal{L}$ generated by $X$ and
with the same set of defining relations
(considered as elements in
$Lie(X)$).
The first question to ask is whether the monomial algebra $A$  and the corresponding
enveloping algebra $\tilde{A} $ share the same $K$-basis, or, equivalently,
the same Hilbert series. This is not so, in general, but
when this holds is further investigated in \cite{TGI0612}.
Enveloping algebras of finite dimensional graded Lie algebras are special cases of
Artin-Schelter regular algebras. It is then natural to ask if our monomial algebras may
deform to algebras in this more general class.

\medskip
An algebra $A = k \oplus A_1 \oplus A_2 \oplus \cdots$ is an
Artin-Schelter regular algebra of dimension $d$ if:

\begin{itemize}
\item $A$ has finite global dimension $d$.
\item $A$ has finite Gelfand-Kirillov dimension.
\item $A$ is Gorenstein, i.e.
\[ \Ext^i_A(K,A) = \begin{cases} 0 & i \neq d \\
                                K(l) & i = d
                   \end{cases}
\]
for some shift $l$.
\end{itemize}

The monomial algebras defined by Lyndon words with a finite
set $N$ of Lyndon atoms have the two first properties. It is therefore natural to
ask if they can be deformed to Artin-Schelter regular algebras.
If $B$ is such a monomial algebra, its Hilbert series is
\[ H_B(t) = \prod_{l \in N} \frac{1}{1-t^{|l|}}. \]
If the resolution of the residue field $K$ of $B$ is
\[ B \vpil \cdots \vpil \oplus_{j \in \hele} B(-j)^{\beta_{ij}} \vpil
\cdots \]
then
\[ H_B(t) ( \sum_{i,j \in \hele} (-1)^i \beta_{ij} t^j) = 1 \]
so
\[  \sum_{i,j \in \hele} (-1)^i \beta_{ij} t^j = \prod_{l \in N} (1-t^{|l|}). \]
Thus, since this polynomial is symmetric up to sign, it is numerically
possible that the monomial algebra $B$ deforms to an algebra with
the Gorenstein property.  Fl{\o}ystad and J.E.Vatne
show that the $\hele^2$-graded
Fibonacci-Lyndon monomial algebras $F_5$ for $n \leq 5$ all deform to
Artin-Schelter regular algebras which are also $\hele^2$-graded, \cite{FlVa}.
For $n \leq 4$ these deformations are enveloping algebras of Lie
algebras but it is not so for $n = 5$. However we have the following.

\begin{proposition} The Fibonacci-Lyndon monomial algebra $F_6$
does not deform to a bigraded Artin-Schelter regular algebra.
\end{proposition}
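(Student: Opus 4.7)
The plan is to combine Anick's minimal resolution of $F_6$ with the Gorenstein symmetry forced by Artin--Schelter regularity, and then extract an algebraic obstruction from a specific missing Betti number. First I would identify the five Lyndon defining relations of $F_6$ (the minimal elements of $U \cup \{f_6\}$ under $\sqsubset$): they sit in bidegrees $(2,1)$, $(3,4)$, $(5,8)$, $(4,7)$, $(1,3)$, with $u_3 = f_6$ at bidegree $(5,8)$. Using Proposition~\ref{nchainprop} and the overlap control of Lemma~\ref{MonLynLemOverlap}, I would enumerate the $n$-chains on $W_6$ and compute the bigraded Anick Betti numbers $\beta_{i,e}(F_6)$. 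The crucial observation is that $\beta_{4,(3,4)}(F_6)=0$: every $3$-chain is assembled from three distinct relations in strictly increasing Lyndon order, so any triple containing $u_3$ or $u_4$ has chain length $\geq 11$, while the only triple built from shorter relations is $(u_1,u_2,u_5)$, whose maximum admissible overlaps sum to just $5$ and so force chain length $\geq 9>7$.

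For any flat $\hele^2$-graded deformation $A$ of $F_6$, upper semicontinuity of Betti numbers in flat families gives $\tilde\beta_{i,e}(A)\le \beta_{i,e}(F_6)$, hence $\tilde\beta_{4,(3,4)}(A)=0$. On the other hand, the Hilbert series $H_A(s,t)=\prod_{i=0}^{5}(1-s^{a_i}t^{b_i})^{-1}$ has numerator $P(s,t)$ satisfying the reciprocity $P(s,t)=s^{8}t^{12}P(s^{-1},t^{-1})$, which is the numerical footprint of Gorenstein duality of global dimension $6$ with dualizing shift $(8,12)=\sum e_i$. Therefore AS-regularity of $A$ must impose the Betti symmetry $\tilde\beta_{i,e}(A)=\tilde\beta_{6-i,(8,12)-e}(A)$ and the top condition $\tilde\beta_{6,(8,12)}(A)=1$. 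Combining these, $\tilde\beta_{2,(5,8)}(A)=\tilde\beta_{4,(3,4)}(A)=0$, so the deformed relation $\tilde u_3$ at bidegree $(5,8)$ must be an algebraic consequence of the other four deformed relations.

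The main obstacle is to turn this forced redundancy into a contradiction. The only syzygy at bidegree $(5,8)$ built from the four surviving relations comes from the unique $2$-chain $u_2u_4$ on $W_6$, which in $F_6$ is the identity $u_2\cdot xyyxyy=xy\cdot u_4$. Redundancy of $\tilde u_3$ in $A$ therefore forces
\[
 \tilde u_2\cdot xyyxyy - xy\cdot \tilde u_4 \;\equiv\; \lambda\,\tilde u_3 \pmod{(\tilde u_1,\tilde u_5)}
\]
for some nonzero scalar $\lambda$, a single polynomial condition in the bigraded perturbation parameters. I would finish by parametrizing the most general $\hele^2$-homogeneous perturbation of each $u_i$ by the normal monomials of the same bidegree (available from Theorem~A), translating the full collection of Gorenstein-symmetric Betti-number constraints and dualizing-complex conditions into a system of polynomial equations in these parameters, and verifying by an explicit computation (for instance via a Gr\"obner basis) that the system is overdetermined and hence has no solution. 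The delicate step is ensuring that the algebraic identity above is genuinely incompatible with the remaining AS-regularity conditions, rather than being merely numerically consistent; it is here that one expects the Fibonacci recursion among the $e_i$ (which creates many bidegree coincidences and constrains the deformation parameters tightly) to play the decisive role.
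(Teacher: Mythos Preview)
Your proposal correctly reproduces the paper's Part~1: compute the bigraded Anick resolution of $F_6$, then use semicontinuity together with Gorenstein self-duality to pin down the unique possible shape of the minimal resolution of any bigraded AS-regular deformation $A$, and in particular force the $(5,8)$ generator in homological degree~$2$ to cancel against the $(5,8)$ $2$-chain (your conclusion $\tilde\beta_{2,(5,8)}(A)=0$). Up to this point you and the paper agree, and your combinatorial verification that no $3$-chain has bidegree $(3,4)$ is correct.

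The gap is that this is only the setup. Everything after ``I would finish by parametrizing\ldots'' is a plan, not an argument, and it is exactly here that the substance of the proof lies. The paper's Parts~2--5 carry out a lengthy explicit computation: it writes the differentials $d_0,d_1,d_2$ of the putative resolution as matrices over $K\langle x,y\rangle$ and exploits $d_{i-1}d_i=0$ layer by layer. First the subcomplex on bidegrees $(2,1),(1,3),(2,3)$ forces the two low-degree relations into the rigid form $x^2y-\alpha yx^2,\ xy^3+\gamma y^3x$ with $\gamma^2=\alpha^3$ (or the degenerate variant $\alpha=\gamma=0$); then the subcomplex involving $(3,4),(4,4),(3,5)$ is analysed and, after substantial case-by-case normal-form reductions, the $(3,4)$ relation is shown to collapse; finally a degree-$13$ check eliminates the last surviving candidate. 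None of this is a routine ``overdetermined Gr\"obner system'' verification; the case split $\alpha\neq 0$ versus $\alpha=0$ and the repeated rewriting using the partial relations are the actual content of the proof, and you have not supplied any of it.

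Your displayed redundancy identity is also somewhat over-stated. Redundancy of $\tilde u_3$ only says it lies in the two-sided ideal $(\tilde u_1,\tilde u_2,\tilde u_4,\tilde u_5)$, and at bidegree $(5,8)$ that ideal is spanned by many two-sided multiples of the shorter relations, not only by the single monomial syzygy $\tilde u_2\cdot xyyxyy - xy\cdot \tilde u_4$ you isolate; the multipliers themselves may deform. One can argue, using the full bigraded Betti table, that modulo $(\tilde u_1,\tilde u_5)$ this is essentially the unique syzygy in that bidegree, but that step is not automatic and you have not given it. In short, the strategy is sound and matches the paper's opening move, but the proposal as written is not yet a proof: the decisive computation remains to be done.
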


\noindent{\bf Remark.} There still remains the possibility though that
it might deform to a singly graded Artin-Schelter regular algebra.

\begin{proof}
Since the complete argument involves a lot of computation,
we will give only a sketch for for the last parts of the proof.

\noindent{Part 1.} Let $B$ be the monomial algebra $F_6$. The resolution of its
residue field may be worked out to be (we write the multidegrees
of the generators of the free modules below):
\begin{equation*}
\underset{\scriptsize{\begin{matrix} (0,0) \end{matrix}}}
{B} \vpil
\underset{\scriptsize{\begin{matrix} (1,0)\\ (0,1)
\end{matrix}}} {B^2} \vpil
\underset{\scriptsize{\begin{matrix} (2,1) \\ (1,3) \\ (3,4) \\
(5,8) \\ (4,7)
 \end{matrix}}} {B^5} \vpil
\underset{\scriptsize{\begin{matrix} (2,3) & (4,8) \\
(4,4) & (5,8) \\
(3,5) & (5,9) \\
(5,7) & (6,9) \\
(6,8) & (6,10)
\end{matrix}}}
{B^{10}}
 \vpil
\underset{\scriptsize{\begin{matrix} (4,5) & (6,10) \\
(5,8) & (6,11) \\
(5,9) & (7,9) \\
(6,8) & (7,10) \\
(6,9 &
\end{matrix}}}
{B^{9}}
 \vpil
\underset{\scriptsize{\begin{matrix} (6,9)  \\
(7,10) \\
(7,11) \\
(8,11)
\end{matrix}}}
{B^{4}}
\vpil
\underset{\scriptsize{\begin{matrix} (8,12)
\end{matrix}}}
{B}
\end{equation*}

If $B$ deforms to a bigraded Artin-Schelter regular algebra $A$, its resolution
is obtained by canceling adjacent terms
of the same multidegrees in the above resolution, and it must have
a selfdual form, since $\Tor_i^A(k,k)$ can be computed by taking a resolution
of $K$ either as a left
or as a right module.
The only possibility for the minimal resolution of $A$ is then

\begin{equation} \label{FibLigResAk}
\underset{\scriptsize{\begin{matrix} (0,0) \end{matrix}}}
{A} \vmto{d_0}
\underset{\scriptsize{\begin{matrix} (1,0)\\ (0,1)
\end{matrix}}} {A^2} \vmto{d_1}
\underset{\scriptsize{\begin{matrix} (2,1) \\ (1,3) \\ (3,4) \\
(4,7)
 \end{matrix}}} {A^4} \vmto{d_2}
\underset{\scriptsize{\begin{matrix} (2,3) \\
(4,4) \\
(3,5) \\
(4,8) \\
(5,7) \\
(6,9)
\end{matrix}}}
{A^{6}}
 \vpil
\underset{\scriptsize{\begin{matrix} (4,5) \\
(5,8) \\
(6,11) \\
(7,9) \\
\end{matrix}}}
{A^{4}}
 \vpil
\underset{\scriptsize{\begin{matrix} (7,12)  \\
(8,11)
\end{matrix}}}
{A^{2}}
\vpil
\underset{\scriptsize{\begin{matrix} (8,12)
\end{matrix}}}
{A}
\end{equation}
where $d_0 = [x,y]$.

The differentials here are represented by matrices whose entries are in
$A$. These entries are then of the form $\pi(p)$, where
$\pi: \; K<x,y> \rightarrow   A$ is the natural quotient map, and
$p \in K<x,y>$. By abuse of notation we shall
simply write $p$ for such an entry. Since the composition of successive
differentials is zero, the product of any two successive matrices
will then have entries which are {\it relations} for A, i.e.
they are in the kernel of $\pi$.

In particular, the defining relations of $A$ are given by the
elements of the product matrix $d_0 \cdot d_1$, which have bidegrees
$(2,1), (1,3), (3,4)$ and $(4,7)$.

\medskip
\noindent{Part 2.}
Now look at the subcomplex
\[
\underset{\scriptsize{\begin{matrix} (0,0) \end{matrix}}}
{A} \vmto{d_0}
\underset{\scriptsize{\begin{matrix} (1,0)\\ (0,1)
\end{matrix}}} {A^2} \vmto{d_1^\prime}
\underset{\scriptsize{\begin{matrix} (2,1) \\ (1,3)
 \end{matrix}}} {A^2} \vmto{d_2^\prime}
\underset{\scriptsize{\begin{matrix} (2,3)
\end{matrix}}}
{A.}
\]

After suitable base changes we may assume that
\[ d_1^\prime = \left [
\begin{matrix} xy + \alpha_0yx & y^3 \\
       -\alpha_1 x^2 & \beta_0 xy^2 + \beta_1 yxy + \beta_2 y^2x
\end{matrix} \right ], \;
d_2^\prime = \left[ \begin{matrix} y^2 \\ \gamma x \end{matrix} \right ].
\]

Multiplying $d_0 = [x,y]$ with $d_1^\prime$ we get the two first of the
four defining relations for $A$:
\begin{align}
\label{FibLigEn} &  x^2y + \alpha_0xyx - \alpha_1 yx^2  \\
\label{FibLigTo} & xy^3 + \beta_0yxy^2 + \beta_1 y^2xy + \beta_2 y^3x.
\end{align}

The product of $d_1^\prime$ and $d_2^\prime$ induces the following relations of $A$.

\begin{align}
\label{FibLigTre}
& xy^3 + \alpha_0yxy^2 + \gamma y^3x \\
\label{FibLigFire}
 -\alpha_1  & x^2y^2 + \gamma \beta_0 xy^2x + \gamma \beta_1 yxyx +
\gamma \beta_2 y^2x^2.
\end{align}
So (\ref{FibLigTre}) must be a consequence of (\ref{FibLigTo}) which gives
\[
\beta_1 = 0, \, \beta_0 = \alpha_0, \, \beta_2 = \gamma .\] Then
the relation (\ref{FibLigFire}) becomes
\begin{equation}
\label{FibLigFem}
- \alpha_1 x^2y^2 + \gamma \alpha_0 xy^2x +
\gamma^2 y^2x^2.
\end{equation}

The relation (\ref{FibLigFem}) must be a linear combination of the following expressions
 obtained by multiplying
(\ref{FibLigEn}) with $y$ on the left and on the right.

\begin{align*}
x^2y^2 + \alpha_0 xyxy - & \alpha_1 yx^2y  \\
& yx^2y + \alpha_0yxyx - \alpha_1y^2x^.
\end{align*}

If  (\ref{FibLigFem}) is nonzero, then  this linear combination is
also nonzero. This gives $\alpha_0 = 0$ and a dependence between
\begin{align*}
- \alpha_1 x^2y^2 + \gamma^2 y^2x^2 \quad \quad \text{and} \quad\quad
x^2y^2 - \alpha_1^2 y^2x^2,
\end{align*}
which implies $\alpha_1^3 = \gamma^2$.

Setting $\alpha_1 = \alpha$ we obtain
\[ d_1^\prime = \left [ \begin{matrix} xy & y^3 \\
- \alpha x^2 & \gamma y^2x \end{matrix} \right ],
d_2^\prime = \left [ \begin{matrix} y^2 \\ \gamma x
                   \end{matrix} \right ],
\]
 where $\gamma^2 = \alpha^3$. Then the relations
(\ref{FibLigEn})
and (\ref{FibLigTo})
are reduced straightforwardly to
\begin{equation}
\label{FibLigRelen}
 x^2y - \alpha yx^2, \, xy^3 + \gamma y^3x.
\end{equation}
If $\alpha$, equivalently $\gamma$, is nonzero then each of the monomials  $x^2$ and $y^3$
commutes with any word up to adjusting with constants.

In the next part we assume that $\alpha$ and $\gamma$ are nonzero.

\medskip
\noindent{Part 3.}
Consider the subcomplex of (\ref{FibLigResAk}) given
by
\begin{equation} \label{FibLigSeks}
\underset{\scriptsize{\begin{matrix} (0,0) \end{matrix}}}
{A} \vmto{d_0}
\underset{\scriptsize{\begin{matrix} (1,0)\\ (0,1)
\end{matrix}}} {A^2} \vmto{d_1^{\prime \prime}}
\underset{\scriptsize{\begin{matrix} (2,1) \\ (1,3) \\ (3,4)
 \end{matrix}}} {A^3} \vmto{d_2^{\prime \prime}}
\underset{\scriptsize{\begin{matrix} (2,3) \\
(4,4) \\
(3,5)
\end{matrix}}}
{A^{3}}
\end{equation}
where
\[ d_1^{\prime \prime} = \left [ \begin{matrix} xy & y^3 & P \\
-\alpha x^2 & \gamma y^2x & -Q \end{matrix} \right ], \,
d_2^{\prime \prime} = \left [
\begin{matrix} y^2 & R_1 & R_2 \\
\gamma x & S_1 & S_2 \\
0 & \mu_1 x & \mu_2 y
\end{matrix} \right ]
\]
We make the following adjustments to $d_1^{\prime \prime}$, noting that
$P$ has bidegree $(2,4)$. By i) subtracting from $P$ right multiplicities
of the first two columns of $d_1^{\prime \prime}$ and ii) using the
relations (\ref{FibLigRelen}) we may assume
\[ P = a_0 y^2xy^2x + a_1 y^2xyxy + a_2 yxy^2xy + a_3 yxyxy^2. \]
Also $R_2$ has bidegree $(1,4)$. By subtracting from $R_2$ right multiplicities
of the first column in $d_2^{\prime \prime}$ and using the relations
(\ref{FibLigRelen}) we may assume $R_2 = 0$.
Furthermore $S_1$ has bidegree $(3,1)$ and again by subtracting right
multiplicities of the first column and using the relations (\ref{FibLigRelen})
we may assume $S_1 = 0$.
The matrices are now
\[ d_1^{\prime \prime} = \left [ \begin{matrix} xy & y^3 & P \\
-\alpha x^2 & \gamma y^2x & -Q \end{matrix} \right ], \,
d_2^{\prime \prime} = \left [
\begin{matrix} y^2 & R_1 & 0 \\
\gamma x & 0 & S_2 \\
0 & \mu_1 x & \mu_2 y
\end{matrix} \right ].
\]
If $\mu_1 = 0$ we get a relation $x^2R_1$ of bidegree $(4,3)$. This must
be a consequence of the relations (\ref{FibLigRelen}) which easily
gives $R_1 = 0$. Similarly if $\mu_2 = 0$ we get a relation
$y^3 S_2 = 0$ of bidegree $(2,5)$ which again easily gives $S_2 = 0$.
Hence both $\mu_1$ and $\mu_2$ must be nonzero, and by a base change
of the generators we may assume they are both $-1$.
Multiplying the matrices above we then get relations
\begin{equation}
\label{relations_eq}
\begin{array}{lc}
xyR_1 = Px, \, & \text{ of bidegree } (3,4) \\
\alpha x^2R_1 = Qx, \, & \text{ of bidegree } (4,3) \\
y^3 S_2 = Py, \, & \text{ of bidegree } (2,5) \\
-\gamma y^2x S_2 = Qy,\, & \text{ of bidegree } (3,4).
\end{array}
\end{equation}
The relations of bidegree $(2,5)$  and $(4,3)$ must be a consequence of the
relations (\ref{FibLigRelen}). The only way this is possible for the
$(2,5)$ relation is if
$a_0 = a_1 = a_2 = 0$ in $P$, since the corresponding terms in $Py$ cannot
be rearranged. Hence we may assume $P = ayxyxy^2$
where $a = a_3$. We then easily see that $S_2 = ayxyx$.

For the relation of bidegree $(4,3)$ to hold, $Q$ must have the form
\[ \alpha x^2 T_0 + \alpha^4 T_1^\prime x,  \]
where $T_0$ and $T_1^\prime$ are bihomogeneous elements of $K\langle x,y\rangle.$

Since $x^2$ commutes up to coefficient change, we may as well assume
that $T_1^\prime$ ends with $y$ and also does not contain $x^2$ as a subword,
so we may write
\[ Q = \alpha x^2 T_0 + \alpha^4 T_1 yx. \]
Considering the relation above of bidegree $(4,3)$, shifting $x^2$ to
the left in $Qx$ and adjusting the coefficients,
we obtain
\[ R_1 = T_0 x + T_1 y, \]
since we may cancel $x^2$.

\medskip
The added defining relation of degree $(3,4)$ of $A$ coming from
the product $d_0.d_1$ is the relation
\begin{equation}
\label{FibLigRelto}  xP - yQ = axyxyxy^2 - yQ.
\end{equation}

The last relation
of bidegree $(3,4)$  in (\ref{relations_eq})  is
\[ \alpha x^2 T_0 y + \alpha^4 T_1 yxy + a \gamma y^2 xyxyx. \]
If nonzero, the term $axyxyxy^2$ of the defining relation (\ref{FibLigRelto}) of
bidegree $(3,4)$ cannot be rearranged using
the first relations (\ref{FibLigRelen}). Moreover,  it does not occur in the
relation above and therefore,
it must be
a consequence of the first defining relations (\ref{FibLigRelen}).
This is also impossible, since
last term cannot cancel. It follows then that $a = 0$ and $P$ and $S_2$ are zero.

\medskip
The three relations of bidegree $(3,4)$ listed
with the defining
relation first are:

\begin{align*}
yx^2 T_0 + \alpha^3yT_1 yx & = 0 \\
x^2T_0 y + \alpha^3T_1 yxy & = 0 \\
xy T_0 x + xy T_1 y & = 0.
\end{align*}
Note that $T_1$ has bidegree $(2,2)$ and does not contain $x^2$ as
subword, so $T_1$ can only contain the terms $m = xyxy, xy^2x$, or
$yxyx$. But then the term $xymy$ in the last relation above cannot
be rearranged by (\ref{FibLigRelen}). Hence it should occur in the
first equation which it does not. Therefore it must be that $T_1 =
0$ and then we easily see that $T_0 = 0$ and so $Q = 0$.
But it is impossible that both
$P$ and $Q$ are zero.

\medskip
\noindent{Part 4.}
Now we consider the case when  (\ref{FibLigFem}) is
identically zero, that is  $\alpha_1 = \gamma = 0$.
Then we obtain (letting $\alpha_0 = -\alpha$)
\[ d_1^{\prime} = \left [ \begin{matrix} xy-\alpha yx & y^3  \\
0 &  -\alpha xy^2  \end{matrix} \right ], \,\,
d_2^{\prime } = \left [
\begin{matrix} y^2 \\
0
\end{matrix} \right ].
\]
We again consider the subcomplex (\ref{FibLigSeks}),
where now
\[ d_1^{\prime \prime} = \left [ \begin{matrix} xy-\alpha yx & y^3 & P \\
0 & -\alpha xy^2 & -Q \end{matrix} \right ], \,\,
d_2^{\prime \prime} = \left [
\begin{matrix} y^2 & R_1 & R_2 \\
0 & S_1 & S_2 \\
0 & \mu_1 x & \mu_2 y
\end{matrix} \right ].
\]
By almost the same type of arguments as in Part 3. we may assume
that $R_2 = 0.$
We work out that $S_2 = 0$ and we may assume
$\mu_2y = -y$. Then it follows quickly that
$P$ is a multiple of $yxyxy^2 - \alpha yxy^2xy$.

Next we show that $Q, S_1$ and $\mu_1$  are all zero and
$R_1 = xyxy^2 - \alpha xy^2xy$. This gives the matrices
\[ d_1^{\prime \prime} = \left [ \begin{matrix} xy-\alpha yx & y^3
& yxyxy^2- \alpha yxy^2xy \\
0 & -\alpha xy^2 &  0 \end{matrix} \right ], \]
\[
d_2^{\prime \prime} = \left [
\begin{matrix} y^2 & xyxy^2- \alpha xy^2xy & 0 \\
0 & 0 & 0 \\
0 & 0 & -y
\end{matrix} \right ]
\]

Part 5. Further computation shows that we must have
\[ d_1 = \left [ \begin{matrix} xy-\alpha yx & y^3
& yxyxy^2- \alpha yxy^2xy & yxy^2xy^2xy^2 - \alpha y^2xyxy^2xy^2\\
0 & -\alpha xy^2 &  0 & 0\end{matrix} \right ].
\]

But now computing the resolution of the algebra with the relations
we get from $d_0 \cdot d_1$, we see that it is not the desired
resolution (\ref{FibLigResAk}). In particular the kernel of $d_1$ has
a syzygy of degree $13$, which is not the case in (\ref{FibLigResAk}).
\end{proof}

{\bf Acknowledgments}.
The first author worked on this paper during her visit to Max Planck
Institute for Mathematics, Bonn, in 2011-2012. It is her pleasant duty to
thank MPIM both for the support and for the inspiring and creative
atmosphere during her visit.

\end{document}